\newtheorem{thm}{Theorem}[section]
\newtheorem{prop}[thm]{Proposition}
\newtheorem{lem}[thm]{Lemma}
\newtheorem{cor}[thm]{Corollary}
\newtheorem{rem}[thm]{Remark}
\newcommand{\R}{\mathbb{R} }
\newcommand{\Q}{\mathbb{Q}}
\newcommand{\Z}{\mathbb{Z}_{>0}}
\newcommand{\Hoff}{\mathfrak{H}}
\newcommand{\D}{\mathfrak{D}}
\newcommand{\Hhat}{\hat{\Hoff}}
\begin{document}
%
\title[Intersection of duality and derivation relations for multiple zeta values]{Intersection of duality and derivation relations\\ for multiple zeta values}
%
\author{Aiki Kimura}
\begin{abstract}
    The duality relation is a basic family of linear relations for multiple zeta values. The extended double shuffle relation (EDSR) is one of the families of relations expected to generate all linear relations among multiple zeta values, but it remains unclear as to whether all duality relations can be deduced from the EDSR. 
    In the present paper, regarding the family generated by the duality relation and the family generated by the derivation relation, an explicit characterization of their intersection is obtained. Here, the derivation relation is a specialization of the EDSR. 
\end{abstract}
%
\keywords{multiple zeta values, double shuffle relation, duality, derivation}
\subjclass{11M32} 
\maketitle
%
\section{Introduction}

Multiple zeta values (MZVs) are defined by the convergent series
\[
    \zeta(k_{1}, \dots, k_{r}) := \sum_{m_1>\dots>m_r>0}\frac{1}{ m^{k_1}_{1}\cdots m^{k_r}_{r} }
\]
for positive integers $k_1, \ldots, k_r$ with $k_1 \geq 2$. 
For an \textit{index} $(k_{1}, \dots, k_{r})$, we call $k_1 + \cdots +k_r$ \textit{weight}, $r$ \textit{depth}, and $\#\{k_i\, \vert\,k_i \geq 2\}$ \textit{height}. 
%
%
Regarding the $\Q$-linear space spanned by all MZVs, there exist many $\Q$-linear relations among MZVs. 
Moreover, this $\Q$-linear space has an algebraic structure, but its structure remains unexplained. 
The duality relation is a basic and important family of $\Q$-linear relations for MZVs. 
In \cite{IKZ}, the extended double shuffle relation (EDSR) is also one of the families of relations expected to generate all $\Q$-linear relations among MZVs. 
%
However, it remains unclear as to whether all duality relations can be deduced from the EDSR. 

We consider the family generated by the duality relation and the family generated by the derivation relation. Here, the derivation relation is a specialization of the EDSR. 
Our first goal is to characterize the intersection of these families explicitly. 
Based on this characterization, we can generate all relations of the intersection. 
In particular, we present the four identities as special cases. From two of these cases, we obtain new proofs of the results of Kajikawa \cite{Kaji} and Li \cite{Li}, and from the other two identities, it is newly pointed out that the two families of the duality relations can be deduced from the derivation relation. 
Note that all of the duality relations cannot be deduced from the intersection. This fact is checked by numerical calculation (see \cite{KT} for details). 

The remainder of the present article is organized as follows. 
In Section \ref{sec:preparation}, we review the basic terminology and well-known results regarding the duality and the derivation relations. 
In Section \ref{sec:characterization}, we first state the key equation on the intersection of the duality and the derivation relations (Theorem \ref{thm:Ker}). Next, we see that the key equation characterizes the entire intersection explicitly (Theorem \ref{thm:dual cap deri}). 
In Section \ref{sec:explicit identity}, we present the four identities as special cases of Theorem \ref{thm:Ker} (Corollary \ref{cor:explicit}). 
In Section \ref{sec:proof}, we prove Theorems \ref{thm:Ker} and \ref{thm:dual cap deri}. 
In Appendix \ref{sec:equivalence}, we show that Corollary \ref{cor:explicit} (i) and (iii) are equivalent to the results of Kajikawa \cite{Kaji} and Li \cite{Li}, respectively. We also mention the results obtained by Kawasaki and Tanaka in Remark \ref{rem:KT}. 
In Appendix \ref{sec:some properties}, we also show a property related to the characterization (Corollary \ref{cor:cap}). 

\section{Preparation} \label{sec:preparation}

To state our results, we use the algebraic setup introduced by Hoffman \cite{Hoffman}. Let $\Hoff =\Q \langle x, y \rangle$ be the non-commutative polynomial algebra over the rationals in two indeterminates $x$ and $y$, and let $\Hoff^0$ be its subalgebra $\Q + x \Hoff y$. 
%
We define the $\Q$-linear map $Z: \Hoff^0 \rightarrow \R$ by $Z(1) = 1$ and
\[
    Z(x^{k_{1}-1}y \cdots x^{k_{r}-1}y) = \zeta (k_1, \cdots, k_r) \quad (k_1 \geq 2). 
\]
For an index $(k_1, \ldots, k_r)$, the weight $k$ and the depth $r$ correspond to the total degree and the degree in $y$ of the monomial $x^{k_{1}-1}y \cdots x^{k_{r}-1}y$, respectively. 
The height also corresponds to the number of $xy$ in the monomial. 
To obtain a $\Q$-linear relation among MZVs is precisely to obtain an element in the kernel of the map $Z$.
Let $\tau:\Hoff \rightarrow \Hoff$ be the anti-automorphism of the algebra $\Hoff$ defined by 
\[
    \tau(x) = y, \quad \tau(y)=x.
\]
The map $\tau$ is an involution and preserves $\Hoff^0$. 
Then, the duality relation is stated as
\[
    (\tau -\mathrm{id}_{\Hoff})(w) \in \mathrm{Ker}(Z) 
\]
for any $w \in \Hoff^0$ (see, e.g., \cite{Zagier}). 
%
For a positive integer $n$, we define the $\Q$-linear map $\partial_n : \Hoff \rightarrow \Hoff$ by 
\[
    \partial_n(x) = -\partial_n(y) = x(x + y)^{n-1}y
\]
and the Leibniz rule
\[
    \partial_n(w_1 w_2) = \partial_n(w_1) w_2 + w_1 \partial_n(w_2), 
\] 
where $w_1, w_2 \in \Hoff$. 
We find in \cite{IKZ} that $\partial_n(\Hoff^0) \subset \Hoff^0$ and that $\mathrm{deg}(\partial_n(w))=\mathrm{deg}(w)+n$ for any monomial $w \in \Hoff$. We also find that $\partial_n$ and $\partial_m$ commute for any $n, m \geq 1$. 
The derivation relation, which is obtained by Ihara, Kaneko, and Zagier  \cite[Theorem 3]{IKZ}, is stated as
\[
    \partial_n(w) \in \mathrm{Ker}(Z) 
\]
for any positive integer $n$ and $w \in \Hoff^0$. 
%
Let $\Hoff\llbracket u\rrbracket $ be the formal power series ring generated by the indeterminate $u$ over $\Hoff$. 
By extending $\tau$ and $\partial_n$ to $\Q\llbracket u \rrbracket$-linear maps, $\tau$ and $\partial_n$ both become maps on $\Hoff\llbracket u \rrbracket$. 
Let $\Delta_u$ be the map on $\Hoff \llbracket u \rrbracket$ defined by
\[
    \Delta_u = \mathrm{exp} \left( \sum_{n \geq 1} \frac{\partial_n}{n} u^{n} \right). 
\]
Then, $\Delta_{u}$ is an automorphism of $\Hoff\llbracket u\rrbracket $ and satisfies 
\[
    \Delta_u(x) = x \frac{1}{1-yu}, \quad
    \Delta_u(y) = (1-xu-yu) \frac{y}{1-yu}, \quad
    \Delta_u(x+y) = x+y, 
\]
where $1/(1-wu) := \sum_{i \geq 0} w^i u^i$ ($w \in \Hoff\llbracket u\rrbracket $). (See \cite[Corollary 3 and Theorem 4]{IKZ} for details.)
The inverse map ${\Delta_u}^{-1}$ also satisfies 
\[
    {\Delta_u}^{-1}(x) = \frac{x}{1-xu} (1-xu-yu), \quad
    {\Delta_u}^{-1}(y) = \frac{1}{1-xu} y, \quad
    {\Delta_u}^{-1}(x+y) = x+y. 
\]
We denote by ${\Delta_u}^e := \underbrace{\Delta_u \circ \cdots \circ \Delta_u}_{e}$, 
${\Delta_u}^{-e} := \underbrace{{\Delta_u}^{-1} \circ \cdots \circ {\Delta_u}^{-1}}_{e}$, 
and ${\Delta_u}^{0} := \mathrm{id} = \mathrm{id}_{\Hoff\llbracket u\rrbracket }$. 
For a positive integer $i$, let $\theta_i$ be the coefficient of $u^i$ in the expansion of $\Delta_{u}$, that is, $\Delta_u = \mathrm{id} + \sum_{i \geq 1} \theta_i u^i$. 
Then, it is easy to see that each $\theta_i$ is composed of the maps $\partial_n$. 
\begin{thm}[Ihara-Kaneko-Zagier {\cite[Theorem 3]{IKZ}}] \label{thm:derivation}
    The following properties are equivalent: 
    \begin{enumerate}
        \item[(i)] $\partial_n(w) \in \mathrm{Ker}(Z)$ for all $w \in \Hoff^0$ and $n \in \Z$; 
        \item[(ii)] $\theta_i(w) \in \mathrm{Ker}(Z)$ for all $w \in \Hoff^0$ and $i \in \Z$. 
    \end{enumerate}
\end{thm}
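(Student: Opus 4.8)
The plan is to read off everything from the defining relation $\Delta_u = \mathrm{exp}\!\left(\sum_{n\geq 1}\frac{\partial_n}{n}u^n\right)$, together with the facts already recorded in the excerpt: that the $\partial_n$ pairwise commute, that each $\partial_n$ preserves $\Hoff^0$, and that $\mathrm{deg}(\partial_n(w)) = \mathrm{deg}(w)+n$. Because the $\partial_n$ commute, the series $\sum_{n\geq 1}\frac{\partial_n}{n}u^n$ lies in a commutative ring of operators, so the relation $\Delta_u = \mathrm{exp}(\cdots)$ is an honest exponential of a formal power series, and taking its formal logarithm is legitimate. The whole statement then reduces to the observation that the two families $\{\partial_n\}_{n\geq 1}$ and $\{\theta_i\}_{i\geq 1}$ generate one another by $\Q$-linear combinations of compositions that contain \emph{no} identity term.

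First I would make the two conversions explicit. Expanding the exponential, $\theta_i$ is the coefficient of $u^i$, namely a sum of terms $\tfrac{1}{m!\,n_1\cdots n_m}\,\partial_{n_1}\circ\cdots\circ\partial_{n_m}$ with $n_1+\cdots+n_m=i$; the degree-shift property forces the indices to sum to $i$, so this is a \emph{finite} $\Q$-linear combination, and since $i\geq 1$ every summand has $m\geq 1$. Conversely, taking the logarithm of $\mathrm{id}+\sum_{i\geq 1}\theta_i u^i$ returns $\sum_{n\geq 1}\frac{\partial_n}{n}u^n$, so $\partial_n$ equals $n$ times the $u^n$-coefficient of $\log(\mathrm{id}+\sum_{i\geq 1}\theta_i u^i)$. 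Again the grading makes this a finite $\Q$-linear combination of compositions $\theta_{i_1}\circ\cdots\circ\theta_{i_l}$ with $i_1+\cdots+i_l=n$, and because $\log(1+X)$ has no constant term, every summand has $l\geq 1$.

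With these in hand, both implications follow by the same ``peel off the outermost factor'' argument. For (i)$\Rightarrow$(ii), fix $w\in\Hoff^0$; in each monomial $\partial_{n_1}\circ\cdots\circ\partial_{n_m}$ occurring in $\theta_i$, the inner part $\partial_{n_2}\circ\cdots\circ\partial_{n_m}(w)$ still lies in $\Hoff^0$ (iterating $\partial_n(\Hoff^0)\subset\Hoff^0$), and applying the outermost $\partial_{n_1}$ to an element of $\Hoff^0$ lands in $\mathrm{Ker}(Z)$ by (i); summing gives $\theta_i(w)\in\mathrm{Ker}(Z)$. For (ii)$\Rightarrow$(i) the argument is symmetric: each $\theta_i$ preserves $\Hoff^0$ (being a combination of compositions of the $\partial_n$), so the inner part of each $\theta_{i_1}\circ\cdots\circ\theta_{i_l}$ keeps $w$ in $\Hoff^0$, and the outermost $\theta_{i_1}$ sends it into $\mathrm{Ker}(Z)$ by (ii), whence $\partial_n(w)\in\mathrm{Ker}(Z)$.

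The proof is structural rather than computational, so there is no deep obstacle; the one point that must be stated carefully is precisely the absence of an identity term in both conversions, i.e. $m\geq 1$ and $l\geq 1$ above. This is indispensable, since $\mathrm{id}$ does not map $\Hoff^0$ into $\mathrm{Ker}(Z)$, so a stray constant term would invalidate the peeling argument. Its vanishing is guaranteed because $\theta_i$ ($i\geq 1$) is a positive-$u$-degree coefficient of the exponential and $\partial_n$ ($n\geq 1$) arises through $\log(1+X)$, neither of which contributes an identity component; the finiteness of all sums is likewise guaranteed by the degree-shift property.
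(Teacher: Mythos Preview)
The paper does not supply its own proof of this statement: it is quoted from \cite{IKZ} and accompanied only by the one-line remark that ``it is easy to see that each $\theta_i$ is composed of the maps $\partial_n$.'' Your argument is correct and is exactly the expansion of that remark---the exp/log inversion between $\{\partial_n\}$ and $\{\theta_i\}$, together with the observation that every term in either conversion is a nonempty composition (no $\mathrm{id}$ summand) and that $\Hoff^0$ is preserved, so the outermost factor always lands in $\mathrm{Ker}(Z)$. There is nothing further to compare against in the paper itself.
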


\section{Characterization} \label{sec:characterization}

In this section, considering the family generated by the duality relation and the family generated by the derivation relation, we state the main results for their intersection. First, we see the key equation on the intersection of the duality and derivation relations (Theorem \ref{thm:Ker}). Second, we explicitly characterize the entire intersection (Theorem \ref{thm:dual cap deri}). The proofs of both theorems are given in Section \ref{sec:proof}. 

Fix a positive \mbox{integer $s$}. For the indeterminates $u_1, \ldots, u_s$, let $\Hoff\llbracket u_1, \ldots, u_s \rrbracket$ be the commutative formal power series ring generated by $u_1, \ldots, u_s$ over $\Hoff$. 
By setting $\Delta_{u_i}(u_j) = u_j$ for $i, j$ ($1 \leq i, j \leq s$), each $\Delta_{u_1}, \ldots, \Delta_{u_s}$ is extended to an automorphism of $\Hoff\llbracket u_1, \ldots, u_s\rrbracket$. 
\begin{prop} \label{prop:commutative}
    On $\Hoff\llbracket u_1, \ldots, u_s\rrbracket $, the maps $\Delta_{u_i}$ and $\Delta_{u_j}$ are commutative for $i$ and $j$ ($1 \leq i, j \leq s$). 
\end{prop}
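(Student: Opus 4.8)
The plan is to reduce the claim to the fact, recalled in Section~\ref{sec:preparation}, that the derivations $\partial_n$ mutually commute. Write $D_i := \sum_{n \geq 1} \frac{u_i^n}{n}\,\partial_n$, so that by definition $\Delta_{u_i} = \exp(D_i)$ as an operator on $\Hoff\llbracket u_1, \ldots, u_s\rrbracket$; here I use that $\partial_n$ has been extended $\Q\llbracket u_1, \ldots, u_s\rrbracket$-linearly, so that the single-variable formula for $\Delta_{u_i}$ persists after the indeterminates $u_j$ ($j \neq i$) are adjoined as central scalars. Once the two first-order operators $D_i$ and $D_j$ are shown to commute, the commutativity $\Delta_{u_i}\Delta_{u_j} = \Delta_{u_j}\Delta_{u_i}$ will follow from the general principle that the exponentials of commuting operators commute.

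First I would verify that $D_i D_j = D_j D_i$. Since $\partial_n$ is $\Q\llbracket u_1, \ldots, u_s\rrbracket$-linear, it commutes with multiplication by any monomial in the $u_k$; consequently, applying $D_i$ after $D_j$ to an element $w$ yields
\[
    D_i D_j(w) = \sum_{n, m \geq 1} \frac{u_i^n u_j^m}{nm}\,\partial_n\bigl(\partial_m(w)\bigr).
\]
Because the scalars $u_i^n u_j^m$ commute and, crucially, $\partial_n \partial_m = \partial_m \partial_n$ for all $n, m \in \Z$, this double sum is symmetric in the two indices and hence equals $D_j D_i(w)$.

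It then remains to pass from commutativity of $D_i$ and $D_j$ to commutativity of $\exp(D_i)$ and $\exp(D_j)$. I would expand both composites as formal double series, $\exp(D_i)\exp(D_j) = \sum_{k, l \geq 0} \frac{1}{k!\,l!}\, D_i^k D_j^l$ and likewise with the roles of $i$ and $j$ interchanged; since $D_i$ and $D_j$ commute one has $D_i^k D_j^l = D_j^l D_i^k$, so the two series agree term by term. The one point requiring genuine care --- and the step I expect to be the main obstacle --- is the legitimacy of this rearrangement of infinite sums. This is where the degree-raising property $\deg(\partial_n(w)) = \deg(w) + n$ enters: each application of a summand of $D_i$ strictly increases the $\Hoff$-degree (and the $u_i$-degree) by at least one, so for any input and any fixed monomial $u_1^{a_1}\cdots u_s^{a_s}$ times a fixed-degree element of $\Hoff$, only finitely many pairs $(k, l)$ contribute. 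This local finiteness makes every operator involved well defined on $\Hoff\llbracket u_1, \ldots, u_s\rrbracket$ and justifies the termwise comparison, completing the proof.
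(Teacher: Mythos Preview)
Your proof is correct and follows essentially the same approach as the paper: reduce to the commutativity of the $\partial_n$ and invoke the definition $\Delta_{u_i}=\exp\bigl(\sum_{n\geq 1}\tfrac{u_i^n}{n}\partial_n\bigr)$. The paper's proof is a one-line remark to this effect, whereas you have additionally spelled out the local-finiteness justification for the exponential expansion; this extra care is fine but not strictly required here.
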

\begin{proof}
    The maps $\partial_n$ and $\partial_m$ are commutative for any positive integers $n$ and $m$. Therefore, this assertion follows from the definitions of $\Delta_{u_i}$ and $\Delta_{u_j}$. 
\end{proof}
%
By setting $\tau(u_i)= u_i$ ($1 \leq i \leq s$), the map $\tau$ is also extended to an anti-automorphism of $\Hoff\llbracket u_1, \ldots, u_s\rrbracket $. 
For an automorphism $\Delta = {\Delta_{u_1}}^{e_1} \circ \cdots \circ {\Delta_{u_s}}^{e_s}$, we define the $\Q$-linear space $\D_{\Delta}$ by
\[
    \D_{\Delta} := \mathrm{span}_{\Q} \left\{ a b \tau(\Delta(a)) \; \bigm\vert \; a \in \Hoff\llbracket u_1, \ldots, u_s\rrbracket , \; b \in (\Q[z])\llbracket u_1, \ldots, u_s\rrbracket  \right\}, 
\]
where $z = x+y$. 
By considering the case $a=1$, it is easy to see that $\D_\Delta$ includes $(\Q[z])\llbracket u_1, \ldots, u_s\rrbracket$, so $\D_\Delta$ is not an empty set. 
Let $\D_{\Delta}^0$ denote $\D_{\Delta} \cap \Hoff^0\llbracket u_1, \ldots, u_s\rrbracket $. 
\begin{thm}\label{thm:Ker}
    A certain element $w \in \Hoff\llbracket u_1, \ldots, u_s\rrbracket$ satisfies the following equation if and only if $w \in \D_{\Delta}$: 
    \begin{equation} \label{eqn:DD}
        (\tau -\mathrm{id})(w) = (\Delta -\mathrm{id})(w). 
    \end{equation}
\end{thm}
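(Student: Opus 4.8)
The plan is first to simplify the target equation. Subtracting, \eqref{eqn:DD} is equivalent to $\tau(w)=\Delta(w)$, so the whole statement is really a description of the solution set of $\tau(w)=\Delta(w)$. The algebraic engine will be the identity $\tau\circ\Delta\circ\tau=\Delta^{-1}$, equivalently $\Delta\circ\tau\circ\Delta=\tau$. To obtain it I would first check on the generators $x,y$ that $\tau\circ\partial_n\circ\tau=-\partial_n$, then observe that $\tau\circ\partial_n\circ\tau$ is again a derivation (the two $\tau$'s restore the Leibniz rule), so it agrees with $-\partial_n$ everywhere; passing through the exponential $\Delta_{u_i}=\mathrm{exp}(\sum_n\frac{\partial_n}{n}u_i^n)$ gives $\tau\Delta_{u_i}\tau=\Delta_{u_i}^{-1}$, and multiplying the commuting factors (Proposition \ref{prop:commutative}) yields $\tau\Delta\tau=\Delta^{-1}$. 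A consequence is that $\rho:=\tau\circ\Delta$ is an involutive anti-automorphism and $\tau(\Delta(a))=\rho(a)$, so the generators of $\D_\Delta$ are exactly the elements $a\,b\,\rho(a)$. Throughout I will use that $\partial_n(z)=0$, whence $\tau,\Delta,\rho$ all fix $z$ and therefore fix every $b\in(\Q[z])\llbracket u_1,\ldots,u_s\rrbracket$.

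The direction $w\in\D_\Delta\Rightarrow$ \eqref{eqn:DD} is a direct computation on a generator $g=a\,b\,\tau(\Delta(a))$. Since $\tau$ is an anti-automorphism fixing $b$, one gets $\tau(g)=\Delta(a)\,b\,\tau(a)$; since $\Delta$ is an automorphism fixing $b$, one gets $\Delta(g)=\Delta(a)\,b\,\Delta(\tau(\Delta(a)))$, and the inner factor collapses to $\tau(a)$ by $\Delta\tau\Delta=\tau$. Hence $\tau(g)=\Delta(g)$, and $\Q$-linearity extends this to all of $\D_\Delta$.

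For the converse, the solution set of $\tau(w)=\Delta(w)$ is precisely the fixed space $\{w:\rho(w)=w\}$ of $\rho$ (apply $\tau$ to $\rho(w)=w$). The key device is an automorphism $\Psi:=\Delta^{-1/2}$, defined by halving each integer exponent $e_i$ inside the exponentials, i.e. $\Psi=\Delta_{u_1}^{-e_1/2}\circ\cdots\circ\Delta_{u_s}^{-e_s/2}$ with $\Delta_{u_i}^{c}:=\mathrm{exp}(c\sum_{n}\frac{\partial_n}{n}u_i^{n})$; this is a $\Q\llbracket u_1,\ldots,u_s\rrbracket$-algebra automorphism fixing $z$, and the computation above also gives $\tau\Psi=\Psi^{-1}\tau$, hence $\Psi^{-1}\rho\,\Psi=\tau$. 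Consequently the fixed space of $\rho$ is $\Psi$ applied to the fixed space of $\tau$, and one checks directly that $\Psi$ carries the generators $a'b\,\tau(a')$ onto the generators $a\,b\,\tau(\Delta(a))$ (set $a=\Psi(a')$), so that $\Psi(\D_{\mathrm{id}})=\D_\Delta$. This reduces the entire theorem to the single case $\Delta=\mathrm{id}$: every $\tau$-symmetric $v$ lies in $\D_{\mathrm{id}}=\mathrm{span}_\Q\{a'b\,\tau(a'):a'\in\Hoff\llbracket u_1,\ldots,u_s\rrbracket,\ b\in(\Q[z])\llbracket u_1,\ldots,u_s\rrbracket\}$.

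I expect this reduced claim to be the main obstacle, and I would prove it by induction on the degree in $x,y$, working $\Q\llbracket u_1,\ldots,u_s\rrbracket$-linearly so that the coefficients may be power series. Splitting the degree-$N$ component by its first and last letters, $\Hoff_N=x\Hoff_{N-2}y\oplus x\Hoff_{N-2}x\oplus y\Hoff_{N-2}y\oplus y\Hoff_{N-2}x$, and writing a symmetric $v=xAy+xBx+yCy+yDx$, symmetry forces $\tau(A)=A$, $\tau(D)=D$, and $C=\tau(B)$. Since $x(a'b\tau(a'))y=(xa')b\,\tau(xa')$ and likewise for $y(\cdot)x$, the blocks $xAy$ and $yDx$ with symmetric $A,D$ are reduced to lower degree by induction, while the mixed block $xBx+y\tau(B)y$ (with $B=\tau(d)$) is produced, using $b=1$, from the polarized generator $(x+yd)\tau(x+yd)-x\,\tau(x)-(yd)\tau(yd)=x\tau(d)x+y\,d\,y$; the base cases $N=0,1$ use $b=1$ and $b=z$, which is exactly where the central factors $b\in\Q[z]$ are genuinely needed. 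The remaining point is bookkeeping for the formal power series: since $\tau$ respects the $x,y$-grading, the construction is carried out degreewise and then assembled, the $\mathrm{span}_\Q$ being understood in the natural degreewise-completed sense; I regard this topological step as routine rather than the substantive difficulty.
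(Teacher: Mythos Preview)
Your forward direction and the identity $\tau\circ\Delta\circ\tau=\Delta^{-1}$ (the paper's Lemma~\ref{lem:tD}) are correct and match the paper. The conjugation by $\Psi=\Delta^{-1/2}$ is a nice observation and does reduce the reverse inclusion to the case $\Delta=\mathrm{id}$; the paper does not do this.

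The gap is in your treatment of the reduced case. Your induction on the $x,y$-degree proves that every $\tau$-symmetric \emph{polynomial} lies in $\D_{\mathrm{id}}$, but a general $v\in\Hoff\llbracket u_1,\ldots,u_s\rrbracket$ has unbounded $x,y$-degree, and your recursion (two sub-problems $A,D$ plus three polarization terms at each step) produces an \emph{infinite} $\Q$-linear combination of generators. You then reinterpret $\mathrm{span}_{\Q}$ ``in the natural degreewise-completed sense'', but that changes the statement: the paper's $\D_\Delta$ is the honest finite $\Q$-span, and showing that this span is closed in the $x,y$-adic topology is essentially equivalent to the inclusion you are trying to prove. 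So this step is not routine bookkeeping; as written, you have only shown $\mathrm{Ker}(\Delta-\tau)\subset\overline{\D_\Delta}$.

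The repair is already in your hands. The polarization identity you invoke for the mixed block,
\[
(a+a')\,\tau(a+a')-a\,\tau(a)-a'\,\tau(a')=a\,\tau(a')+a'\,\tau(a),
\]
applied once with $a=v$, $a'=1$ (and $b=1$) gives $2v$ directly whenever $\tau(v)=v$, so $v\in\D_{\mathrm{id}}$ as a sum of three generators and no induction is needed. The paper proceeds exactly this way, but for arbitrary $\Delta$ rather than after your reduction: from $w=\tau(\Delta(w))$ it writes $w$ as a combination of three terms of the form $a\,b\,\tau(\Delta(a))$ via the same polarization. Thus your $\Psi$-reduction is correct but ultimately unnecessary, while your degree induction should be replaced by the single polarization step.
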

We give the proof of Theorem \ref{thm:Ker} in Section \ref{sec:proof}. 
\begin{rem}\upshape \label{rem:identity}
    In Equation \eqref{eqn:DD}, if $w \in \Hoff^0 \llbracket u_1, \ldots, u_s\rrbracket$, i.e., $w \in \D_{\Delta}^0$, then the left- and right-hand sides coincide with the duality and derivation relations, respectively. In other words, the duality relation of the left-hand side is generated by the derivation relation of the right-hand side, explicitly. 
\end{rem}
\begin{prop} \label{prop:power}
    For any $\Delta$, 
    if $w \in \D_{\Delta}$, then $w^d \in \D_{\Delta}$ ($d \in \Z$). 
\end{prop}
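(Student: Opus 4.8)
The plan is to reduce everything to the clean characterization provided by Theorem \ref{thm:Ker}. First I would observe that the defining equation \eqref{eqn:DD} simplifies: cancelling the common term $-w$ on both sides, the condition $(\tau - \mathrm{id})(w) = (\Delta - \mathrm{id})(w)$ is equivalent to $\tau(w) = \Delta(w)$. Thus Theorem \ref{thm:Ker} says precisely that an element $w \in \Hoff\llbracket u_1, \ldots, u_s\rrbracket$ lies in $\D_{\Delta}$ if and only if $\tau(w) = \Delta(w)$, and I would adopt this intertwining relation as the working form of membership.

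Granting this, the proposition becomes almost immediate. Assume $w \in \D_{\Delta}$, so that $\tau(w) = \Delta(w)$. Since $\tau$ is an anti-automorphism of $\Hoff\llbracket u_1, \ldots, u_s\rrbracket$ and $\Delta$ is an automorphism, I would apply both to the power $w^d$ for $d \in \Z$. Because $\tau$ reverses order but every factor in the product $w^d = w \cdots w$ is the same element, the reversal has no effect, giving $\tau(w^d) = \tau(w)^d$; likewise $\Delta(w^d) = \Delta(w)^d$. Both identities follow by a one-line induction on $d$ from (anti-)multiplicativity. Combining, $\tau(w^d) = \tau(w)^d = \Delta(w)^d = \Delta(w^d)$, so $w^d$ again satisfies the characterizing equation and hence lies in $\D_{\Delta}$ by Theorem \ref{thm:Ker}.

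There is essentially no obstacle here beyond invoking Theorem \ref{thm:Ker}: the whole content is that membership in $\D_{\Delta}$ is the single relation $\tau(w) = \Delta(w)$, which is manifestly stable under taking powers. The only point deserving care is the order-reversal of the anti-automorphism $\tau$, which is harmless precisely because we raise a single element to a power rather than multiply distinct elements; it is worth recording this explicitly. One could in principle attempt a direct proof from the spanning-set definition, expanding $w$ as a $\Q$-linear combination of terms $a\, b\, \tau(\Delta(a))$ and multiplying out $w^d$, but the resulting cross terms are not individually of the required form, so the route through Theorem \ref{thm:Ker} is decisively cleaner.
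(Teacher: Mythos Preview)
Your proof is correct and follows essentially the same route as the paper: reduce membership in $\D_{\Delta}$ via Theorem~\ref{thm:Ker} to the relation $\tau(w)=\Delta(w)$, then use multiplicativity of $\Delta$ and anti-multiplicativity of $\tau$ (harmless on a power of a single element) to conclude $\tau(w^d)=\Delta(w^d)$. Your added remark that the order-reversal of $\tau$ is harmless precisely because all factors are equal is a worthwhile clarification, but otherwise the arguments coincide.
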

\begin{proof}
    For any $w \in \D_\Delta$, it is easy to see $\tau(w) = \Delta(w)$ from Theorem \ref{thm:Ker}. 
    Then, we have 
    \[
        \tau(w^d) = \tau(w)^d = \Delta(w)^d = \Delta(w^d). 
    \]
    Therefore, we also obtain $(\tau-\mathrm{id})(w^d) = (\Delta-\mathrm{id})(w^d)$, and so this proof is completed using Theorem \ref{thm:Ker}. 
\end{proof}
%
For a subset $\mathfrak{A}$ of $\Hoff\llbracket u_1, \ldots, u_s\rrbracket $, let $\mathrm{Coef}_{u_1, \ldots, u_s}(\mathfrak{A})$ denote the set that is composed of all of the coefficients $w_{i_1, \ldots, i_s}$ appearing in each element $\sum_{i_1, \ldots, i_s \geq 0}w_{i_1, \ldots, i_s} u_1^{i_1} \cdots u_s^{i_s} \in \mathfrak{A}$. 
If the subset $\mathfrak{A}$ is a $\Q$-linear space, then $\mathrm{Coef}_{u_1, \ldots, u_s}(\mathfrak{A})$ is a $\Q$-linear space, too. 
Let $\partial(\mathfrak{A})$ be the $\Q$-linear space 
$\mathrm{span}_{\Q}\{\partial_n(w) \; | \; n \in \Z, \; w \in \mathfrak{A}\}$. 
\begin{thm}[] \label{thm:dual cap deri}
    For integers $e_1, \ldots, e_s$ with $(e_1, \ldots, e_s) \neq (0, \ldots, 0)$, let $\Delta = {\Delta_{u_1}}^{e_1} \circ \cdots \circ {\Delta_{u_s}}^{e_s}$. 
    Then, the following sets are equal:
    \begin{enumerate}
        \item[(i)] $(\tau -\mathrm{id})(\Hoff) \cap \partial(\Hoff)$; 
        \item[(ii)] $\mathrm{Coef}_{u_1, \ldots, u_s} \left((\tau -\mathrm{id})(\D_{\Delta}) \right)$; 
        \item[(iii)] $\mathrm{Coef}_{u_1, \ldots, u_s} \left((\Delta -\mathrm{id})(\D_{\Delta}) \right)$. 
    \end{enumerate}
\end{thm}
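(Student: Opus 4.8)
The plan is to show that (ii) and (iii) coincide, that each is contained in (i), and finally that (i) is contained in (iii); together these give the asserted equalities. The coincidence of (ii) and (iii) is immediate from Theorem~\ref{thm:Ker}: each $w\in\D_{\Delta}$ satisfies $(\tau-\mathrm{id})(w)=(\Delta-\mathrm{id})(w)$, so $(\tau-\mathrm{id})(\D_{\Delta})$ and $(\Delta-\mathrm{id})(\D_{\Delta})$ are the same subset of $\Hoff\llbracket u_1,\dots,u_s\rrbracket$ and therefore have the same image under $\mathrm{Coef}_{u_1,\dots,u_s}$. For the containment in (i), write $w=\sum_I w_I\,u_1^{i_1}\cdots u_s^{i_s}$; since $\tau$ fixes every $u_i$, each coefficient of $(\tau-\mathrm{id})(w)$ equals $(\tau-\mathrm{id})(w_I)\in(\tau-\mathrm{id})(\Hoff)$. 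The same element is, by the coincidence just noted, a coefficient of $(\Delta-\mathrm{id})(w)$; as $\Delta-\mathrm{id}$ has vanishing constant term and each of its higher coefficients is a $\Q$-linear combination of compositions $\partial_{n_1}\cdots\partial_{n_k}$ with $k\geq1$, and $\partial_m(\Hoff)\subseteq\Hoff$, this element is $\partial_{n_1}$ of something in $\Hoff$ and hence lies in $\partial(\Hoff)$. Thus every such coefficient belongs to $(\tau-\mathrm{id})(\Hoff)\cap\partial(\Hoff)$, so (ii) and (iii) are contained in (i).

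The substantial part is the containment of (i) in (iii). First I would replace (i) by a more convenient description. Because $\tau$ is an involution, $(\tau-\mathrm{id})(\Hoff)$ equals the $(-1)$-eigenspace $\Hoff^{-}:=\{v\in\Hoff\mid\tau(v)=-v\}$. From the explicit formulas for $\Delta_u$ and ${\Delta_u}^{-1}$ one checks $\tau\circ\Delta_u=\Delta_u^{-1}\circ\tau$; since $\Delta_u=\exp\!\bigl(\sum_{n\geq1}\tfrac{\partial_n}{n}u^n\bigr)$, this forces $\tau\partial_n\tau=-\partial_n$, i.e.\ $\tau\partial_n=-\partial_n\tau$, for every $n$. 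Consequently $\partial(\Hoff)$ is stable under $\tau$, and its intersection with $\Hoff^{-}$ is spanned by the antisymmetrisations $\partial_n(v)-\tau\partial_n(v)=\partial_n\bigl((\mathrm{id}+\tau)(v)\bigr)$. As $(\mathrm{id}+\tau)(\Hoff)=\Hoff^{+}:=\{v\mid\tau(v)=v\}$, this yields the reformulation that (i) equals $\partial(\Hoff)\cap\Hoff^{-}=\mathrm{span}_{\Q}\{\partial_n(s)\mid n\in\Z,\ s\in\Hoff^{+}\}$.

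It then suffices to realise each $\partial_n(s)$ with $s\in\Hoff^{+}$ as a coefficient appearing in (iii). Every $a\in\Hoff$ gives a generator $a\,\tau(\Delta(a))\in\D_{\Delta}$, and $a\,\tau(a)$ is $\tau$-fixed; since products $a_1\tau(a_2)$ span $\Hoff$, polarisation shows the elements $a\,\tau(a)$ span $\Hoff^{+}$, so it is enough to treat $s=a\,\tau(a)$. Choose an index $j_0$ with $e_{j_0}\neq 0$ (possible by hypothesis) and extract the coefficient of $u_{j_0}^{n}$ from
\[
  (\Delta-\mathrm{id})\bigl(a\,\tau(\Delta(a))\bigr)=\Delta(a)\,\tau(a)-a\,\tau(\Delta(a)),
\]
with the remaining variables set to $0$, so that $\Delta$ restricts to $\Delta_{u_{j_0}}^{\,e_{j_0}}=\exp\!\bigl(e_{j_0}\sum_{m\geq1}\tfrac{\partial_m}{m}u_{j_0}^m\bigr)$. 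Using $\partial_m(z)=0$, the Leibniz rule, and $\tau\partial_m=-\partial_m\tau$, this coefficient can be written as $\tfrac{e_{j_0}}{n}\,\partial_n(a\tau(a))+R_n$, where $R_n$ collects the contributions of the compositions $\partial_{m_1}\cdots\partial_{m_k}$ with $k\geq 2$ and $m_1+\cdots+m_k=n$, so that each $m_i<n$. I would then induct on $n$: the base case $n=1$ produces $e_{j_0}\,\partial_1(a\tau(a))$, which together with polarisation yields all of $\partial_1(\Hoff^{+})$, and the inductive step solves for $\partial_n(a\tau(a))$ once $R_n$ is known to lie in $\mathrm{span}_{\Q}\{\partial_m(\Hoff^{+})\mid m<n\}$.

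The main obstacle is exactly this last point. A typical summand of $R_n$ has the form $P(a)\tau(a)-a\,\tau(P(a))$ with $P=\partial_{m_1}\cdots\partial_{m_k}$; writing $P=\partial_{m_1}Q$ and applying the Leibniz rule, I can peel off one derivation to rewrite it as $\partial_{m_1}(F)$ with $F$ a $\tau$-fixed element, plus a remainder that again has the shape of a lower-order antisymmetrisation but now mixes $a$ with $\partial_{m_1}(a)$. This forces the induction to be carried out in a bilinear (polarised) form, tracking expressions $P(c)\tau(d)-c\,\tau(P(d))$ for independent $c,d\in\Hoff$ and inducting on the number $k$ of derivation factors; the delicate bookkeeping of signs (governed by $\tau\partial_m=-\partial_m\tau$) and of which element each surviving derivation lands on is where the real work lies. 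Once this reduction lemma is established, $R_n\in\mathrm{span}_{\Q}\{\partial_m(\Hoff^{+})\mid m<n\}$, the induction closes, and $\partial(\Hoff^{+})$ is seen to be contained in (iii), completing the proof. The hypothesis $(e_1,\dots,e_s)\neq(0,\dots,0)$ is used precisely to secure a nonzero $e_{j_0}$, without which $\partial_n$ would be annihilated and (iii) would collapse.
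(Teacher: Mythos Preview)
Your overall architecture matches the paper's: first use Theorem~\ref{thm:Ker} to get (ii)$=$(iii)$\subset$(i), then for the reverse inclusion reduce (i) to $(\tau-\mathrm{id})(\partial(\Hoff))$ (your $\partial(\Hoff^+)$ is exactly this set, and your argument via $\tau\partial_n=-\partial_n\tau$ is correct and corresponds precisely to the paper's inclusion~\eqref{inclusion 1}), and finally realise these elements as coefficients coming from $\D_\Delta$ by induction on $n$.

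The substantive difference, and the genuine gap, lies in the inductive step. You feed in the \emph{quadratic} generators $a\,\tau(\Delta(a))$ of $\D_\Delta$; the coefficient of $u_{j_0}^n$ in $\Delta(a)\tau(a)-a\,\tau(\Delta(a))$ is then $\tfrac{e_{j_0}}{n}\partial_n(a\tau(a))+R_n$, and you must show that $R_n$---a sum of expressions $P(a)\tau(a)-a\,\tau(P(a))$ with $P$ a composite of at least two $\partial_m$'s---lies in $\mathrm{span}\{\partial_m(\Hoff^+):m<n\}$. You correctly identify this as ``where the real work lies'' but leave it unproven. Peeling off one $\partial_{m_1}$ as you propose produces a residual term of the shape $Q(a)\,\tau(\partial_{m_1}(a))-\partial_{m_1}(a)\,\tau(Q(a))$, which is \emph{not} of the form your inductive hypothesis covers (the two slots now carry different arguments), so the induction on $k$ does not close as stated; making it work requires a carefully polarised double induction together with symmetrisation over the commuting $\partial_m$'s, and this is not short.

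The paper bypasses this entirely via an observation you are missing: besides the multiplicative description $\D_\Delta=\mathrm{span}\{ab\,\tau(\Delta(a))\}$ there is an \emph{additive} one, namely $\D_\Delta=\mathrm{Ker}(\Delta-\tau)=\mathrm{Im}(\Delta^{-1}+\tau)$ (Lemma~\ref{lem:Ker=Im}). Feeding in $(\Delta^{-1}+\tau)(\tau(w))\in\D_\Delta$ for \emph{arbitrary} $w\in\Hoff$ and applying $\tau-\mathrm{id}$ yields, up to sign, $(\tau-\mathrm{id})\bigl((\Delta-\mathrm{id})(w)\bigr)$, whose coefficient of $u_{j_0}^n$ is $(\tau-\mathrm{id})$ applied to $e_{j_0}\theta_n(w)$ plus composites $\theta_{i_1}\!\cdots\theta_{i_\ell}(w)$ with each $i_j<n$. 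Now every remainder term is already of the form $(\tau-\mathrm{id})(\theta_i(w'))$ for some $w'\in\Hoff$ and $i<n$, so the induction closes in one line. The linearity in $w$ replaces your quadratic-in-$a$ structure, and the combinatorial reduction lemma simply evaporates.
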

We give the proof of Theorem \ref{thm:dual cap deri} in Section \ref{sec:proof}. 
This theorem still remains true even if we restrict to $\Hoff^0$. 
\begin{cor} \label{cor:H0}
    For integers $e_1, \ldots, e_s$ with $(e_1, \ldots, e_s) \neq (0, \ldots, 0)$, let $\Delta = {\Delta_{u_1}}^{e_1} \circ \cdots \circ {\Delta_{u_s}}^{e_s}$. 
    Then, the following sets are equal: 
    \begin{enumerate}
        \item[(i)] $(\tau -\mathrm{id})(\Hoff^0) \cap \partial(\Hoff^0)$; 
        \item[(ii)] $\mathrm{Coef}_{u_1, \ldots, u_s} \left((\tau -\mathrm{id})(\D_{\Delta}^0) \right)$; 
        \item[(iii)] $\mathrm{Coef}_{u_1, \ldots, u_s} \left((\Delta -\mathrm{id})(\D_{\Delta}^0) \right)$. 
    \end{enumerate}
\end{cor}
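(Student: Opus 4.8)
The plan is to re-run the proof of Theorem~\ref{thm:dual cap deri} while carrying the condition ``belongs to $\Hoff^0$'' through every step; the key point is that all the maps in play respect $\Hoff^0$. Indeed, $\tau(\Hoff^0)=\Hoff^0$ and $\partial_n(\Hoff^0)\subseteq\Hoff^0$, and since $\Delta-\mathrm{id}$ is a $\Q\llbracket u_1,\ldots,u_s\rrbracket$-linear combination of compositions of the $\partial_n$, the automorphism $\Delta$ preserves $\Hoff^0\llbracket u_1,\ldots,u_s\rrbracket$; consequently $\D_{\Delta}^0$ is stable under both $\tau$ and $\Delta$. With this in hand the equality of (ii) and (iii) is immediate: for any $w\in\D_{\Delta}^0\subseteq\D_{\Delta}$, Theorem~\ref{thm:Ker} gives $(\tau-\mathrm{id})(w)=(\Delta-\mathrm{id})(w)$, so the two families of coefficients literally coincide.

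Next I would establish the easy inclusions $\text{(ii)},\text{(iii)}\subseteq\text{(i)}$. Fix $w\in\D_{\Delta}^0$ and write $w=\sum_I w_I\,u_1^{i_1}\cdots u_s^{i_s}$ with every $w_I\in\Hoff^0$. Since $\tau$ is $\Q\llbracket u_1,\ldots,u_s\rrbracket$-linear, the coefficients of $(\tau-\mathrm{id})(w)$ are the elements $(\tau-\mathrm{id})(w_I)\in(\tau-\mathrm{id})(\Hoff^0)$. On the other hand, writing $\Delta=\exp\!\big(\sum_{i,n}(e_i/n)\,\partial_n\,u_i^{n}\big)$ shows that each coefficient of $(\Delta-\mathrm{id})(w)$ is a $\Q$-linear combination of iterated derivations $\partial_{n_1}\cdots\partial_{n_t}(w_J)$ with $t\geq1$; collapsing $\partial_{n_2}\cdots\partial_{n_t}(w_J)$ into a single element of $\Hoff^0$ via $\partial_n(\Hoff^0)\subseteq\Hoff^0$ shows that these coefficients lie in $\partial(\Hoff^0)$. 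Combined with (ii)$=$(iii), every coefficient lies in $(\tau-\mathrm{id})(\Hoff^0)\cap\partial(\Hoff^0)$, which is exactly (i).

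The crux, and the step I expect to be the main obstacle, is the reverse inclusion $\text{(i)}\subseteq\text{(ii)}$: given $v\in(\tau-\mathrm{id})(\Hoff^0)\cap\partial(\Hoff^0)$, one must produce a witness $w\in\D_{\Delta}^0$ realizing $v$ as one of the coefficients of $(\tau-\mathrm{id})(w)$. Since $v$ also lies in the full intersection (i) of Theorem~\ref{thm:dual cap deri}, that theorem already furnishes a witness in $\D_{\Delta}$; the difficulty is to make it lie in $\Hoff^0\llbracket u_1,\ldots,u_s\rrbracket$. The observation I would use is a first/last-letter lemma on the generators: if $a\in x\Hoff\llbracket u_1,\ldots,u_s\rrbracket$, then $a\,b\,\tau(\Delta(a))\in\D_{\Delta}^0$ for every $b\in(\Q[z])\llbracket u_1,\ldots,u_s\rrbracket$. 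This holds because each $\Delta_{u_i}$ and its inverse send $x$ into the left ideal $x\Hoff\llbracket u_1,\ldots,u_s\rrbracket$ (indeed $\Delta_{u_i}(x)=x/(1-yu_i)$ and ${\Delta_{u_i}}^{-1}(x)=\tfrac{x}{1-xu_i}(1-xu_i-yu_i)$), so, being algebra automorphisms, they preserve that ideal; hence so does $\Delta$, giving $\tau(\Delta(a))\in\Hoff\llbracket u_1,\ldots,u_s\rrbracket\,y$, and the product $a\,b\,\tau(\Delta(a))$ then begins with $x$ and ends with $y$, i.e.\ lies in $x\Hoff\llbracket u_1,\ldots,u_s\rrbracket\,y\subseteq\Hoff^0\llbracket u_1,\ldots,u_s\rrbracket$. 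I would then revisit the explicit construction in the proof of Theorem~\ref{thm:dual cap deri} and check that, when the input $v$ comes from $\Hoff^0$, the witnessing element of $\D_{\Delta}$ can be assembled entirely from generators with $a\in x\Hoff\llbracket u_1,\ldots,u_s\rrbracket$; by the lemma this places the witness in $\D_{\Delta}^0$ and finishes $\text{(i)}\subseteq\text{(ii)}$. Verifying that the construction can indeed be carried out with $a$ of this restricted form is the delicate part, and is where the bulk of the work lies.
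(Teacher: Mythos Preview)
Your overall plan---re-run the proof of Theorem~\ref{thm:dual cap deri} while tracking that every map involved preserves $\Hoff^0$---is exactly what the paper intends (it states the corollary with no separate proof, only the remark ``This theorem still remains true even if we restrict to $\Hoff^0$''). Your treatment of (ii)$=$(iii) and of the inclusion (ii),(iii)$\subseteq$(i) is fine.

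Where you go astray is in the reverse inclusion (i)$\subseteq$(ii). You regard it as the ``crux'' and propose to verify that the witness in $\D_\Delta$ can be written as a sum of generators $a\,b\,\tau(\Delta(a))$ with $a\in x\Hoff\llbracket u_1,\dots,u_s\rrbracket$. That detour is unnecessary and, as stated, does not fit the actual construction: the proof of Theorem~\ref{thm:dual cap deri} never builds the witness out of the generator description of $\D_\Delta$. Instead it uses the identification $\D_\Delta=\mathrm{Im}(\Delta^{-1}+\tau)$ (equation~\eqref{eqn:D=Im}, coming from Lemma~\ref{lem:Ker=Im}(i)), and the explicit witnesses are elements such as $(\Delta^{-1}+\tau)\bigl({\Delta_{u_2}}^{e_2}\circ\cdots\circ{\Delta_{u_s}}^{e_s}(\tau(w))\bigr)$. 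When $w\in\Hoff^0$, every map in this composition---$\tau$, each ${\Delta_{u_i}}^{\pm1}$, and $\Delta^{-1}$---preserves $\Hoff^0\llbracket u_1,\dots,u_s\rrbracket$ (you already noted this), so the witness lands in $\Hoff^0\llbracket u_1,\dots,u_s\rrbracket\cap\D_\Delta=\D_\Delta^0$ for free. There is no ``delicate part''. By contrast, if you insisted on going through the generator decomposition (from the proof of Theorem~\ref{thm:Ker}), the $a$'s appearing there are $w+1-z$, $w$, and $1-z$, none of which need lie in $x\Hoff\llbracket u_1,\dots,u_s\rrbracket$, so your first/last-letter lemma would not directly apply. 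Drop the lemma and simply observe that the $\mathrm{Im}(\Delta^{-1}+\tau)$ construction respects $\Hoff^0$; then the argument is complete in one line.
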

\begin{rem}\upshape \label{rem:intersection}
    Corollary \ref{cor:H0} (i) coincides exactly with the entire intersection of the duality and derivation relations. Since both Corollary \ref{cor:H0} (ii) and (iii) are stated by the explicit set $\D_{\Delta}^{0}$, each of these explicitly characterizes the intersection. 
    Furthermore, all relations of the intersection can be generated by substituting elements of $\D_{\Delta}^{0}$ into Equation \eqref{eqn:DD}. 
\end{rem}
Note that Theorem \ref{thm:dual cap deri} and Corollary \ref{cor:H0} hold for any positive integer $s$. It suffices to consider only the case in which $s=1$.  However, in Section \ref{sec:explicit identity}, we explain that the multiplex indeterminates $u_1, \ldots, u_s$ are useful to fix the parameters. 

\section{Explicit Identities} \label{sec:explicit identity}
As special cases of Theorem \ref{thm:Ker}, we show the four explicit identities on the intersection of the duality and derivation relations. From two of the four identities, we reobtain the results of Kajikawa \cite{Kaji} and Li \cite{Li}, respectively. 
From the other two identities, it is newly pointed out that the derivation relation can generate two families of the duality relations for the special indices. 
Note that these results are obtained due to the extension of the multiple indeterminates $u_1, \ldots, u_s$. 
 
\begin{cor}[]
    Assume $s \geq 3$ and $d$ is a positive integer. Then, we have the following: \label{cor:explicit}
    \begin{enumerate}
    \item[(i)]
        $\displaystyle(\tau - \mathrm{id}) \left( \Bigl( \frac{x}{1-xu_1} \frac{y}{1-yu_2} \Bigr)^d \right)
        =(\Delta_{u_1} \circ \Delta_{u_2}^{-1} -\mathrm{id}) \left( \Bigl(\frac{x}{1-xu_1} \frac{y}{1-yu_2} \Bigr)^d \right). $
    \item[(ii)] 
        $\displaystyle(\tau - \mathrm{id}) \left( \Bigl( x\frac{1}{1-yu_1} \frac{1}{1-xu_2}y \Bigr)^d \right)
        = (\Delta_{u_1}^{-1} \circ \Delta_{u_2} -\mathrm{id}) \left( \Bigl(x\frac{1}{1-yu_1} \frac{1}{1-xu_2}y \Bigr)^d \right). $
    \item[(iii)]
        $\displaystyle (\tau - \mathrm{id}) \left( \frac{xu_1}{1-xu_1} y \frac{1}{1- \frac{1}{1-xu_3}yu_2} \frac{1}{1-xu_3}y \right)$ \\
        $\displaystyle = -(\Delta_{u_1} \circ \Delta_{u_2}^{-1} \circ \Delta_{u_3} -\mathrm{id}) \left( \frac{x}{1-xu_1}(1-xu_1-yu_1) \frac{1}{1- \frac{1}{1-xu_3}yu_2} \frac{1}{1-xu_3}y \right)$ \\ 
        \hfill $\displaystyle + (\Delta_{u_2}^{-1} \circ \Delta_{u_3} -\mathrm{id}) \left( x \frac{1}{1- \frac{1}{1-xu_3}yu_2} \frac{1}{1-xu_3}y \right). $
    \item[(iv)]
        $\displaystyle (\tau - \mathrm{id}) \left( x\frac{1}{1-yu_1} \frac{1}{1- xu_2 \frac{1}{1-yu_1}}y \frac{1}{1-xu_3}yu_3 \right)$ \\
        $\displaystyle = -(\Delta_{u_1}^{-1} \circ \Delta_{u_2} \circ \Delta_{u_3}^{-1} -\mathrm{id}) \left( x\frac{1}{1-yu_1} \frac{1}{1- xu_2 \frac{1}{1-yu_1}} (1-xu_3-yu_3) \frac{1}{1-xu_3}y \right)$ \\
        \hfill $\displaystyle+ (\Delta_{u_1}^{-1} \circ \Delta_{u_2} -\mathrm{id}) \left( x \frac{1}{1-yu_1} \frac{1}{1- xu_2 \frac{1}{1-yu_1}}y \right). $\\
    \end{enumerate}
\end{cor}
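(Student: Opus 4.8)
The plan is to read each of the four identities as an instance of Theorem~\ref{thm:Ker}: for each, I would produce one or two elements of the relevant space $\D_{\Delta}$ and then invoke the equivalence ``$w\in\D_{\Delta}\iff(\tau-\mathrm{id})(w)=(\Delta-\mathrm{id})(w)$''. The engine is the conjugation relation
\[
    \tau\circ\Delta_{u_i}=\Delta_{u_i}^{-1}\circ\tau\qquad(1\le i\le s),
\]
which I would prove by evaluating on generators: both sides are anti-automorphisms fixing every $u_j$, and e.g.\ $\tau\Delta_{u_i}(x)=\frac{1}{1-xu_i}y=\Delta_{u_i}^{-1}(y)=\Delta_{u_i}^{-1}\tau(x)$. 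By Proposition~\ref{prop:commutative} this propagates to $\tau\circ\Delta=\Delta^{-1}\circ\tau$, so that $\tau(\Delta(a))=\Delta^{-1}(\tau(a))$, which is exactly what makes a product $a\,b\,\tau(\Delta(a))$ computable. Alongside it I would record the dictionary
\[
    x\tfrac{1}{1-yu}=\Delta_{u}(x),\quad \tfrac{1}{1-xu}y=\Delta_{u}^{-1}(y),\quad \tfrac{x}{1-xu}(1-zu)=\Delta_{u}^{-1}(x),\quad \Delta\Bigl(\tfrac{w}{1-wu}\Bigr)=\tfrac{\Delta(w)}{1-\Delta(w)u},
\]
the last being the homomorphism rule for geometric series, which rewrites every rational factor in the statement as an image of a generator; in particular $\frac{1}{1-\frac{1}{1-xu_3}yu_2}\frac{1}{1-xu_3}y=\Delta_{u_3}^{-1}\bigl(\frac{y}{1-yu_2}\bigr)$. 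Finally Proposition~\ref{prop:power} reduces the $d$-th power cases to $d=1$.

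For (i) and (ii) the arguments of $(\tau-\mathrm{id})$ and of $(\Delta-\mathrm{id})$ coincide, so it suffices to realize that argument as an element of $\D_{\Delta}$. Case (ii) is the prototype: with $\Delta=\Delta_{u_1}^{-1}\circ\Delta_{u_2}$ and $a=x\frac{1}{1-yu_1}=\Delta_{u_1}(x)$, the conjugation relation gives
\[
    \tau(\Delta(a))=\Delta^{-1}(\tau(a))=\Delta_{u_2}^{-1}\Delta_{u_1}\bigl(\Delta_{u_1}^{-1}(y)\bigr)=\Delta_{u_2}^{-1}(y)=\tfrac{1}{1-xu_2}y,
\]
so that $a\,\tau(\Delta(a))$ is precisely the bracketed factor (with $b=1$), and Proposition~\ref{prop:power} delivers the $d$-th power. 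Case (i), with $\Delta=\Delta_{u_1}\circ\Delta_{u_2}^{-1}$, is the inverse partner; here the first factor $\frac{x}{1-xu_1}$ is \emph{not} a single $\Delta_{u_i}^{\pm1}(x)$, so the witness is less immediate, and I expect either to locate the correct $a$ together with a nontrivial $z$-factor $b$ absorbing the spurious $(1-zu_2)$ produced by $\Delta_{u_2}^{-1}$, or to verify $\tau(w)=\Delta(w)$ directly through the homomorphism rule.

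For (iii) and (iv) the two sides carry different arguments, and the mechanism is a telescoping that peels one automorphism off $\Delta$. For (iii), writing $P=\Delta_{u_3}^{-1}\bigl(\frac{y}{1-yu_2}\bigr)$ and using the head identity $\frac{xu_1}{1-xu_1}y=x-\Delta_{u_1}^{-1}(x)$, the left argument becomes $(x-\Delta_{u_1}^{-1}(x))P$, and after cancelling the two $\mathrm{id}$'s the claim collapses to
\[
    (\tau-\Delta')(xP)=(\tau-\Delta)\bigl(\Delta_{u_1}^{-1}(x)P\bigr),\qquad \Delta=\Delta_{u_1}\circ\Delta',\ \ \Delta'=\Delta_{u_2}^{-1}\circ\Delta_{u_3}.
\]
This in turn follows from the two membership statements $xP\in\D_{\Delta'}$ and $\Delta_{u_1}^{-1}(x)P\in\D_{\Delta}$, each of which kills one side via Theorem~\ref{thm:Ker}. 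Case (iv) is entirely parallel: with $R=x\frac{1}{1-yu_1}\frac{1}{1-xu_2\frac{1}{1-yu_1}}y$ and the tail identity $(1-zu_3)\frac{1}{1-xu_3}y=y-y\frac{1}{1-xu_3}yu_3$, the first right-hand argument equals $R-w$, where $w$ is the left argument, and the identity reduces to $(\tau-\Delta)(w)=(\Delta''-\Delta)(R)$ with $\Delta=\Delta_{u_1}^{-1}\circ\Delta_{u_2}\circ\Delta_{u_3}^{-1}$ and $\Delta''=\Delta_{u_1}^{-1}\circ\Delta_{u_2}$; this follows from $R\in\D_{\Delta''}$ together with $R-w\in\D_{\Delta}$.

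The main obstacle is the explicit noncommutative power-series bookkeeping behind these membership statements: producing the correct witness $a$ (most delicate for (i), and for the three-variable elements $\Delta_{u_1}^{-1}(x)P$ and $R-w$ arising in (iii), (iv)) and verifying the folding and telescoping identities, where one must track how $\Delta_{u_i}^{\pm1}$ acts on nested geometric series such as $\frac{1}{1-\frac{1}{1-xu_3}yu_2}$ and $\frac{1}{1-xu_2\frac{1}{1-yu_1}}$ without the symmetry that makes (ii) immediate. Granting these finitely many reductions, each of (i)--(iv) is a formal consequence of Theorem~\ref{thm:Ker} and Proposition~\ref{prop:power}.
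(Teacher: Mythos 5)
Your proposal is correct and is essentially the paper's own proof: each identity is obtained by exhibiting elements of the form $a\,b\,\tau(\Delta(a))$, invoking Theorem~\ref{thm:Ker} (together with Proposition~\ref{prop:power} for the $d$-th powers in (i), (ii)), and, for (iii) and (iv), subtracting two instances of Equation~\eqref{eqn:DD} --- your reductions to $(\tau-\Delta')(xP)=(\tau-\Delta)(\Delta_{u_1}^{-1}(x)P)$ and to $(\tau-\Delta)(w)=(\Delta''-\Delta)(R)$ are exactly this subtraction. The only pieces you leave open are the explicit witnesses, which the paper supplies and which close your flagged gaps: for (i), $a=\Delta_{u_1}^{-1}(x)$ with $b=(1-zu_1)^{-1}(1-zu_2)^{-1}$ (the extra $(1-zu_1)^{-1}$ cancels the $z$-factor inside $a$ itself, not only the one produced by $\Delta_{u_2}^{-1}$); for (iii), $a=\Delta_{u_1}^{-1}(x)$, $b=(1-zu_2)^{-1}$ and $a'=x$, $b'=(1-zu_2)^{-1}$, using that $\Delta(a)=\Delta'(a')$ so both elements share the tail $P$; for (iv), $a=a'=\Delta_{u_1}\circ\Delta_{u_2}^{-1}(x)$ with $b=(1-zu_2)^{-1}(1-zu_3)$ and $b'=(1-zu_2)^{-1}$. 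With these inserted, your argument is complete and coincides with the paper's.
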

\begin{proof}[Proof of Corollary \ref{cor:explicit}]
    Substituting one or two special elements of $\D_{\Delta}^0$ into Equation \eqref{eqn:DD}, we obtain each identity. 
    For the identity (i), we take the special element $ab\tau(\Delta(a))$, where $\Delta = \Delta_{u_1}\circ {\Delta_{u_2}}^{-1}$, $a={\Delta_{u_1}}^{-1}(x)$ and $b=(1-xu_1-yu_1)^{-1} (1-xu_2-yu_2)^{-1}$. 
    From Proposition \ref{prop:power}, we see that $\left\{ a b \tau(\Delta(a)) \right\}^d \in \D^{0}_{\Delta}$, i.e., 
    \[
        \left(x \frac{1}{1-xu_1} \frac{1}{1-yu_2}y\right)^d \in \D^{0}_{\Delta}. 
    \]
    By substituting the above element into Equation \eqref{eqn:DD}, the identity (i) is obtained. 
    %
    Similarly, for the identity (ii), we also take a special element $ab\tau(\Delta(a))$, where $\Delta = {\Delta_{u_1}}^{-1} \circ \Delta_{u_2}$, $a=\Delta_{u_1}(x)$, and $b=1$. 
    By substituting $\{ab\tau(\Delta(a))\}^d$ into Equation \eqref{eqn:DD}, the identity (ii) is obtained, too. 
    %
    For the identity (iii), we take two special elements $a b \tau(\Delta(a))$ and $a' b' \tau(\Delta'(a'))$, where $\Delta = \Delta_{u_1} \circ {\Delta_{u_2}}^{-1} \circ \Delta_{u_3}$, $a = {\Delta_{u_1}}^{-1}(x)$, and $b =(1-xu_2-yu_2)^{-1}$; $\Delta' = {\Delta_{u_2}}^{-1} \circ \Delta_{u_3}$, $a' = x$, and $b' = (1-xu_2-yu_2)^{-1}$. 
    Then, we see 
    \[
          \frac{x}{1-xu_1}(1-xu_1-yu_1) \frac{1}{1- \frac{1}{1-xu_3}yu_2} \frac{1}{1-xu_3}y = a b \tau(\Delta(a))\in \D^{0}_{\Delta}\; , 
    \]
    \[
          x \frac{1}{1- \frac{1}{1-xu_3}yu_2} \frac{1}{1-xu_3}y = a' b' \tau(\Delta'(a')) \in \D^0_{\Delta'}. 
    \]
    By substituting each of the above two elements into Equation \eqref{eqn:DD} and subtracting these equations, the identity (iii) is exactly obtained. 
    %
    Similarly, for the identity (iv), we also take two special elements $a b \tau(\Delta(a))$ and $a' b' \tau(\Delta'(a'))$, where $\Delta = {\Delta_{u_1}}^{-1} \circ \Delta_{u_2} \circ \Delta_{u_3}$, $a = \Delta_{u_1} \circ {\Delta_{u_2}}^{-1}(x)$, and $b = (1-xu_2-yu_2)^{-1} (1-xu_3-yu_3)$; $\Delta' = {\Delta_{u_1}}^{-1} \circ \Delta_{u_2}$, $a' = \Delta_{u_1} \circ {\Delta_{u_2}}^{-1}(x)$, and $b' = (1-xu_2-yu_2)^{-1}$. 
    By substituting each element into Equation \eqref{eqn:DD} and subtracting these equations, the identity (iv) is exactly obtained, too. 
\end{proof}
%
Considering the left-hand sides of identities (ii) and (iv), we newly point out that the two families of the duality relations can be generated by the derivation relation. 
%
%
\begin{cor} \label{cor:in deri} \mbox{}
    \begin{enumerate}
        \item[(i)] For positive integers $d$, $m$, and $n$, we have 
        \[
            (\tau-\mathrm{id})\left(
                \sum_{\substack{m_1 + \cdots + m_d = m \\ m_1, \ldots, m_d \geq 0}} \; 
                \sum_{\substack{n_1 + \cdots + n_d = n \\ n_1, \ldots, n_d \geq 0}} 
                    xy^{m_1} x^{n_1}y \cdots xy^{m_d} x^{n_d}y
            \right)
            \in \partial(\Hoff^0). 
        \]
        \item[(ii)] For positive integers $k$, $r$, and $m$ ($k > r + m$), we have 
        \[
            (\tau-\mathrm{id})\left(
                \sum_{\substack{m_1 + \cdots + m_{r} = k-r-m-1 \\ m_1, \ldots, m_{r} \geq 0}}
                    x( x^{m_1}y \cdots x^{m_{r}}y) x^{m-1} y
            \right)
            \in \partial(\Hoff^0). 
        \]
    \end{enumerate}
\end{cor}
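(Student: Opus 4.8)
The plan is to obtain Corollary \ref{cor:in deri} directly from the two special identities in Corollary \ref{cor:explicit}, by reading off their left-hand sides coefficient by coefficient. The conceptual engine is that, after expansion in $u_1,u_2,u_3$, the \emph{right}-hand side of each identity in Corollary \ref{cor:explicit} is a power series all of whose coefficients lie in $\partial(\Hoff^0)$; since the two sides of each identity are equal, every coefficient of the left-hand side must lie in $\partial(\Hoff^0)$ as well, and the statements in Corollary \ref{cor:in deri} are exactly such coefficients of the identities (ii) and (iv).

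First I would isolate the lemma that makes the right-hand sides tractable: for any $\Delta = \Delta_{u_1}^{e_1}\circ\cdots\circ\Delta_{u_s}^{e_s}$, every coefficient (in the $u_i$) of $\Delta-\mathrm{id}$ is a $\Q$-linear combination of compositions $\partial_{n_1}\circ\cdots\circ\partial_{n_t}$ with $t\geq 1$. This is immediate from $\Delta_u^{\pm1}=\mathrm{exp}(\pm\sum_{n\geq 1}\frac{\partial_n}{n}u^n)$: each $\Delta_{u_i}^{e_i}-\mathrm{id}$ has vanishing constant term and positive-degree coefficients built from the $\theta_i$, hence from the $\partial_n$, and composing several such maps only produces longer compositions of $\partial_n$. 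Since $\partial_n(\Hoff^0)\subseteq\Hoff^0$, applying such a composition to $w\in\Hoff^0$ gives $\partial_{n_1}(\partial_{n_2}\cdots\partial_{n_t}(w))$ whose inner factor already lies in $\Hoff^0$, so the whole element lies in $\partial(\Hoff^0)$. Consequently, for any $w\in\Hoff^0\llbracket u_1,\ldots,u_s\rrbracket$, every coefficient of $(\Delta-\mathrm{id})(w)$ lies in $\partial(\Hoff^0)$; alternatively this is already contained in Corollary \ref{cor:H0}(iii), applied to the elements of $\D_\Delta^0$ produced in the proof of Corollary \ref{cor:explicit}.

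Next I would apply this to the two identities. The right-hand side of Corollary \ref{cor:explicit}(ii), respectively (iv), is a $\Q\llbracket u_1,u_2,u_3\rrbracket$-combination of terms $(\Delta-\mathrm{id})(w)$ with $w\in\Hoff^0\llbracket u_1,u_2,u_3\rrbracket$, so by the lemma each of its coefficients lies in $\partial(\Hoff^0)$. Because $\tau$ fixes every $u_i$, extracting the coefficient of a fixed monomial $u_1^{i_1}u_2^{i_2}u_3^{i_3}$ commutes with both $\tau$ and $\mathrm{id}$; hence the corresponding coefficient of the left-hand side $(\tau-\mathrm{id})(W)$ equals $(\tau-\mathrm{id})$ applied to the coefficient of $W$, and so lies in $\partial(\Hoff^0)$. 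It then remains to expand the rational generating series $W$ by geometric series and to identify, for a judiciously chosen monomial, the coefficient of $W$ (up to $\tau$ and a sign) with the explicit finite sum in Corollary \ref{cor:in deri}: for part (i) this is the coefficient of $u_1^m u_2^n$ in the $d$-th power, where $d$ fixes the number of blocks; for part (ii) one selects the monomial in $u_1,u_2,u_3$ that pins down the depth $r$, the weight $k$, and the terminal exponent $m$. Since $\partial(\Hoff^0)$ is a $\Q$-linear space and $(\tau-\mathrm{id})(\tau(v))=-(\tau-\mathrm{id})(v)$, it is harmless whether the extracted coefficient is the claimed sum itself or its image under $\tau$.

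I expect the main obstacle to be precisely this last coefficient-matching step for part (ii): the generating series in Corollary \ref{cor:explicit}(iv) involves nested and repeated indeterminates, so expanding it and checking that a single monomial's coefficient reproduces exactly the sum over $m_1+\cdots+m_r=k-r-m-1$ with the stated ranges and the constraint $k>r+m$ requires careful reindexing. This is exactly the point at which the multiple indeterminates $u_1,\ldots,u_s$ do their work of fixing the parameters, and getting the correspondence between the exponent triple $(i_1,i_2,i_3)$ and $(k,r,m)$ right is the delicate part; by contrast the algebraic input (the lemma on $\Delta-\mathrm{id}$ and the commutation of $\tau$ with coefficient extraction) is routine.
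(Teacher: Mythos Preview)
Your proposal is correct and follows the same route as the paper: the paper's own proof consists of the two sentences that Corollary~\ref{cor:in deri}(i) is the coefficient of $u_1^{m}u_2^{n}$ in Corollary~\ref{cor:explicit}(ii), and Corollary~\ref{cor:in deri}(ii) is the part of Corollary~\ref{cor:explicit}(iv) of total degree $k-2$, degree $r-1$ in $u_1$, and degree $m$ in $u_3$. Your extra justification that every $u$-coefficient of $(\Delta-\mathrm{id})(w)$ lies in $\partial(\Hoff^0)$, and your remark on the $\tau$-symmetry $(\tau-\mathrm{id})(\tau v)=-(\tau-\mathrm{id})(v)$, simply make explicit what the paper leaves implicit; for the coefficient bookkeeping in~(ii) it helps to first simplify the generating series via $\frac{1}{1-yu_1}\,\frac{1}{1-xu_2\frac{1}{1-yu_1}}=\frac{1}{1-yu_1-xu_2}$, after which the triple $(i_1,i_2,i_3)=(r-1,\,k-r-m-1,\,m)$ drops out immediately.
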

Corollary \ref{cor:in deri} (i) follows by considering the part of Corollary \ref{cor:explicit} (ii) that satisfies degree $m$ in $u_1$ and degree $n$ in $u_2$. 
Similarly, Corollary \ref{cor:in deri} (ii) also follows by considering the part of Corollary \ref{cor:explicit} (iv) that satisfies total degree $k-2$ in $u_1$, $u_2$, and $u_3$, degree $r-1$ in $u_1$, and degree $m$ in $u_3$. 
%
Applying the map $Z$ to Corollary \ref{cor:in deri} (i) and (ii), we find that the derivation relations can deduce these duality relations for the sum of MZVs with special indices. 
From Corollary \ref{cor:in deri} (i), we obtain the duality relation for the sum of MZVs with indices $(1+ \{ 1, \}^{m_1}\; n_1+ 1, \ldots, 1+ \{1, \}^{m_d}\; n_d+ 1)$, where $\{a\}^{m}$ denotes $\underbrace{a \cdots a}_m$. 
Similarly, from Corollary \ref{cor:in deri} (ii), we also obtain the duality relation for the sum of MZVs with weight $k$, depth $r+1$, and $k_{r+1} = m$. 
In particular, taking $d=1$ in Corollary \ref{cor:in deri} (i), we find that the simple duality relation $\zeta(2, \{1,\}^{m -1} {n +1}) - \zeta(2, \{1,\}^{n -1} {m +1})$ can be generated by the derivation relation. This result is not reported in Kajikawa \cite{Kaji}, Kawasaki and Tanaka \cite{KT}, or Li \cite{Li}, so it is newly pointed out here. 

Of course, similar results also follow from each of Corollary \ref{cor:explicit} (i) and (iii). 
The left-hand side of Corollary \ref{cor:explicit} (i) coincides with the duality relation for the sum of MZVs with fixed weight, depth, and height. 
Here, the weight, depth, and height are fixed by the total degree in $u_1$ and $u_2$, the degree in $u_2$, and $d$, respectively. 
Similarly, the left-hand side of Corollary \ref{cor:explicit} (iii) expresses the duality relation for the sum of MZVs with fixed weight, depth, and the first component $k_1$ of the index. 
Here, the weight, depth, and $k_1$ are also fixed by the total degree in $u_1$, $u_2$, and $u_3$, the degree in $u_2$, and the degree in $u_3$, respectively. 
In fact, Corollary \ref{cor:explicit} (i) is equivalent to the identity of Kajikawa \cite[Main Theorem]{Kaji} (Proposition \ref{prop:equivalent}), 
and Corollary \ref{cor:explicit} (iii) is equivalent to the identity of Li \cite[Theorem 1]{Li} (Proposition \ref{prop:equivalent2}). 
The proofs are given in Appendix A.  
Note that Corollary \ref{cor:explicit} (iii) contains both results given by Kawasaki and Tanaka \cite[Theorem (i) and (ii)]{KT} as well as the result of Li \cite[Theorem 1]{Li}. We mention this in Remark \ref{rem:KT}. 
%
\section{Proof of Theorems} \label{sec:proof}
%
%
In this section, we prove Theorems \ref{thm:Ker} and \ref{thm:dual cap deri}. 
First, to prove Theorem \ref{thm:Ker}, we show the key lemma as follows. 
\begin{lem}[] \label{lem:tD}
    On $\Hoff\llbracket u_1, \ldots, u_s\rrbracket $, we have 
    \[
        \tau \circ {\Delta_{u_i}}^{-1} \circ \tau \equiv \Delta_{u_i} \quad (1 \leq i \leq s). 
    \]
\end{lem}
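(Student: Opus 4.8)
The plan is to reduce the statement first to a single indeterminate and then to the level of the underlying derivations. Since the automorphism $\Delta_{u_i}$ involves only the variable $u_i$ and fixes every $u_j$ with $j \neq i$, while $\tau$ fixes all of $u_1, \ldots, u_s$, the asserted identity splits variable by variable: it suffices to prove $\tau \circ {\Delta_u}^{-1} \circ \tau = \Delta_u$ as $\Q\llbracket u \rrbracket$-linear maps on $\Hoff\llbracket u \rrbracket$ for a single indeterminate $u = u_i$.

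My preferred route passes to the generating derivation. Writing $D = \sum_{n \geq 1} \frac{\partial_n}{n} u^n$, we have $\Delta_u = \exp(D)$ and ${\Delta_u}^{-1} = \exp(-D)$, where the exponential is well defined in the $u$-adic topology because $D$ raises the $u$-degree. Since $\tau$ is a linear involution (so $\tau^{-1} = \tau$), conjugation commutes with the exponential series termwise, giving
\[
    \tau \circ {\Delta_u}^{-1} \circ \tau = \tau \circ \exp(-D) \circ \tau = \exp\bigl( -\, \tau \circ D \circ \tau \bigr).
\]
Hence the lemma reduces to the single operator identity $\tau \circ \partial_n \circ \tau = -\partial_n$ for every $n \in \Z$; granting this, $\tau \circ D \circ \tau = -D$, so the right-hand side above is $\exp(D) = \Delta_u$.

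To establish $\tau \circ \partial_n \circ \tau = -\partial_n$, I would first observe that the conjugate of a derivation by the anti-automorphism $\tau$ is again a derivation: applying $\tau$ to the Leibniz rule and using that $\tau$ reverses products shows that the two Leibniz terms reassemble into the correct (order-preserving) derivation identity for $\tau \circ \partial_n \circ \tau$. Both $\tau \circ \partial_n \circ \tau$ and $-\partial_n$ are therefore $\Q\llbracket u \rrbracket$-linear derivations of $\Hoff\llbracket u \rrbracket$, so they coincide as soon as they agree on the generators $x$ and $y$. This final check is immediate from $\tau(x) = y$, $\tau(y) = x$, $\tau(x+y) = x+y$, together with $\partial_n(x) = -\partial_n(y) = x(x+y)^{n-1}y$: a one-line evaluation yields $\tau \partial_n \tau(x) = -\partial_n(x)$ and $\tau \partial_n \tau(y) = -\partial_n(y)$.

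The only genuinely delicate points are the anti-automorphism bookkeeping, namely verifying that conjugation by the order-reversing map $\tau$ preserves the Leibniz rule rather than producing a right-handed variant, and checking that the passage $\tau \exp(-D) \tau = \exp(-\tau D \tau)$ is legitimate at the level of formal power series operators. Neither is deep, but both should be stated cleanly. As an alternative that avoids the derivation computation, one may argue directly: $\tau \circ {\Delta_u}^{-1} \circ \tau$ is an algebra automorphism fixing $u$ (a composite of two anti-automorphisms and one automorphism), hence equals $\Delta_u$ once it agrees with $\Delta_u$ on $x$ and $y$; substituting the explicit formulas for ${\Delta_u}^{\pm 1}$ and summing the resulting geometric series in $u$ confirms the agreement.
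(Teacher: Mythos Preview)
Your proof is correct. The paper, however, takes precisely what you sketch as your ``alternative'': it observes that $\tau \circ {\Delta_{u_i}}^{-1} \circ \tau$ is a ring homomorphism (composite of two anti-automorphisms and one automorphism) and then checks agreement with $\Delta_{u_i}$ on the generators $x$, $y$, and $u_j$ using the closed formulas for ${\Delta_u}^{\pm 1}(x)$ and ${\Delta_u}^{\pm 1}(y)$ already recorded in the preliminaries. Your main route instead descends to the generating derivation and establishes the sharper identity $\tau \circ \partial_n \circ \tau = -\partial_n$, then lifts it through the exponential. Both are short; the paper's version is marginally more economical because the explicit images ${\Delta_u}^{-1}(x)$, ${\Delta_u}^{-1}(y)$ are already on the page, so no Leibniz or exponential bookkeeping is needed. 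Your approach buys the clean anti-commutation relation $\tau \partial_n = -\partial_n \tau$, which is a nice structural fact in its own right even if the paper does not isolate it.
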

\begin{proof}
    It is easy to check that the map $\tau \circ {\Delta_{u_i}}^{-1} \circ \tau$ is a ring homomorphism. For each generator $x$, $y$, and $u_j$ ($1 \leq j \leq s$), we see that
    \[
        \tau \circ {\Delta_{u_i}}^{-1} \circ \tau(x) = \Delta_{u_i}(x), \quad \tau \circ {\Delta_{u_i}}^{-1} \circ \tau(y) = \Delta_{u_i}(y), \quad \tau \circ {\Delta_{u_i}}^{-1} \circ \tau(u_j) = \Delta_{u_i}(u_j). 
    \]
\end{proof}
By using Lemma \ref{lem:tD}, we prove Theorem \ref{thm:Ker}. 
\begin{proof}[Proof of Theorem \ref{thm:Ker}]
    It is straightforward that $w \in \mathrm{Ker}(\Delta - \tau)$ if and only if $w$ satisfies Equation \eqref{eqn:DD}. 
    Hence, we prove the following equation: 
    \begin{equation}
        \mathrm{Ker}(\Delta - \tau) = \D_{\Delta}. \label{eqn:Ker=D}
    \end{equation}
    Since $\mathrm{Ker}(\Delta - \tau)$ is a $\Q$-linear space, we first check that each generator $ab\tau(\Delta(a)) \in \D_{\Delta}$ belongs to $\mathrm{Ker}(\Delta - \tau)$, where $a \in \Hoff\llbracket u_1, \ldots, u_s\rrbracket$ and $b \in (\Q[z])\llbracket u_1, \ldots, u_s\rrbracket$ ($z = x+y$). By using Lemma \ref{lem:tD} and recalling $\Delta(z) = \tau(z) =z$, we obtain the following: 
    \begin{align*}
        (\Delta - \tau)(a b \tau(\Delta(a)))
        &= \Delta \left( a b \tau(\Delta(a)) \right) - \tau \left( a b \tau(\Delta(a)) \right) \\
        &= \Delta(a) \Delta(b) \Delta(\tau(\Delta(a))) - \tau \left( \tau(\Delta(a)) \right) \tau(b) \tau(a) \\
        &= \Delta(a) \Delta(b) \Delta(\Delta^{-1}(\tau(a))) - \Delta(a) \tau(b) \tau(a) \\
        &= \Delta(a) b \tau(a) - \Delta(a) b \tau(a) \\
        &= 0. 
    \end{align*}
    Thus, $\mathrm{Ker}(\Delta -\tau) \supset \D_\Delta$ is obtained. 
    Next, we check that $\mathrm{Ker}(\Delta - \tau) \subset \D_{\Delta}$. 
    For $w \in \mathrm{Ker}(\Delta - \tau)$, it is easy to see that $w = \tau(\Delta(w))$, so we have the following: 
    \begin{align*}
        w &= \frac{1}{2}w + \frac{1}{2}\tau(\Delta(w))\\
        &= \frac{1}{2}w\frac{1}{1-x-y}(1-x-y) + \frac{1}{2}(1-x-y)\frac{1}{1-x-y}\tau(\Delta(w)) \\
        &= \frac{1}{2} (w + 1-x-y) \frac{1}{1-x-y} \{ \tau(\Delta(w)) + 1-x-y \} \\
        & \quad -\frac{1}{2} w \frac{1}{1-x-y} \tau(\Delta(w)) -\frac{1}{2} (1-x-y) \frac{1}{1-x-y} (1-x-y) \\
        &= \frac{1}{2}(w + 1-x-y) \frac{1}{1-x-y}\tau(\Delta(w + 1-x-y)) \\
        & \quad -\frac{1}{2} w \frac{1}{1-x-y} \tau(\Delta(w)) -\frac{1}{2} (1-x-y) \frac{1}{1-x-y} \tau(\Delta(1-x-y)). 
    \end{align*}
    Since each of the three terms is a generator of $\D_\Delta$, we find that $w \in \D_\Delta$. 
    In other words, $\mathrm{Ker}(\Delta -\tau) \subset \D_\Delta$, and so Equation \eqref{eqn:Ker=D} holds. 
\end{proof}
%
Next, we show the following lemma needed for the proof of Theorem \ref{thm:dual cap deri}. 
\begin{lem}[] \label{lem:Ker=Im} \mbox{}
    On $\Hoff\llbracket u_1, \ldots, u_s\rrbracket $, we have the following identities: 
    \begin{enumerate}
        \item[(i)] 
        \[
            \mathrm{Ker}(\Delta - \tau) = \mathrm{Im}(\Delta^{-1} + \tau). 
        \]
        \item[(ii)] 
        \[
            \mathrm{Ker}(\Delta + \tau) = \mathrm{Im}(\Delta^{-1} - \tau). 
        \]
    \end{enumerate}
\end{lem}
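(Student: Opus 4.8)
The plan is to reduce both identities to a single commutation relation between $\tau$ and $\Delta$, after which each set equality follows from two routine inclusions proved by essentially the same computation. The relation I would establish first is
\[
    \tau \circ \Delta \circ \tau = \Delta^{-1},
\]
equivalently $\Delta \circ \tau = \tau \circ \Delta^{-1}$ and $\tau \circ \Delta = \Delta^{-1} \circ \tau$, valid on $\Hoff\llbracket u_1, \ldots, u_s\rrbracket$. This is the engine of the whole proof: once it is in hand, the kernel--image statements become formal.

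To prove the relation I would start from Lemma \ref{lem:tD}, which gives $\tau \circ {\Delta_{u_i}}^{-1} \circ \tau = \Delta_{u_i}$. Since $\tau$ is an involution, taking inverses of both sides yields $\tau \circ \Delta_{u_i} \circ \tau = {\Delta_{u_i}}^{-1}$, and iterating gives $\tau \circ {\Delta_{u_i}}^{e_i} \circ \tau = {\Delta_{u_i}}^{-e_i}$ for every integer $e_i$. Writing $\Delta = {\Delta_{u_1}}^{e_1} \circ \cdots \circ {\Delta_{u_s}}^{e_s}$ and inserting $\tau \circ \tau = \mathrm{id}$ between consecutive factors, the conjugation by $\tau$ distributes across the composition, and by Proposition \ref{prop:commutative} the resulting factors ${\Delta_{u_i}}^{-e_i}$ recombine into $\Delta^{-1}$.

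With the relation secured, I would prove (i) by two inclusions. For $\mathrm{Im}(\Delta^{-1} + \tau) \subseteq \mathrm{Ker}(\Delta - \tau)$, a direct expansion of the composite $(\Delta - \tau) \circ (\Delta^{-1} + \tau)$ collapses, after cancelling $\Delta \circ \Delta^{-1} = \mathrm{id}$ and $\tau \circ \tau = \mathrm{id}$, to $\Delta \circ \tau - \tau \circ \Delta^{-1}$, which vanishes by the relation; hence this composite is the zero map. For the reverse inclusion, given $w$ with $\Delta(w) = \tau(w)$, I would exhibit the explicit preimage $v = \tfrac{1}{2}\Delta(w)$ and check that $(\Delta^{-1} + \tau)(v) = \tfrac{1}{2}w + \tfrac{1}{2}\tau(\Delta(w)) = w$, where $\tau(\Delta(w)) = w$ follows from $\tau \circ \Delta = \Delta^{-1} \circ \tau$ together with $\tau(w) = \Delta(w)$. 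Identity (ii) is proved verbatim with the signs adjusted: $(\Delta + \tau) \circ (\Delta^{-1} - \tau) = 0$, and for $w$ with $\Delta(w) = -\tau(w)$ the same preimage $v = \tfrac{1}{2}\Delta(w)$ gives $(\Delta^{-1} - \tau)(v) = w$. The only genuine content is the commutation relation of the second paragraph; everything after it is bookkeeping, so I do not anticipate a real obstacle beyond correctly distributing the conjugation over the composite $\Delta$ and tracking the signs in part (ii).
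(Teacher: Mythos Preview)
Your proposal is correct and follows essentially the same route as the paper: both prove $\mathrm{Im}(\Delta^{-1}+\tau)\subset\mathrm{Ker}(\Delta-\tau)$ by expanding $(\Delta-\tau)\circ(\Delta^{-1}+\tau)$ and invoking the relation $\Delta\circ\tau=\tau\circ\Delta^{-1}$ (which the paper cites directly as Lemma~\ref{lem:tD}, while you spell out its extension from $\Delta_{u_i}$ to the composite $\Delta$), and both prove the reverse inclusion by exhibiting an explicit preimage. Your choice $v=\tfrac12\Delta(w)$ is literally the paper's $\tfrac12\tau(w)$, since $\Delta(w)=\tau(w)$ on the kernel, so the arguments coincide.
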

Note that we use only Lemma \ref{lem:Ker=Im} (i) in the proof of Theorem \ref{thm:dual cap deri}. However, Lemma \ref{lem:Ker=Im} (ii) is used in the proof of Proposition \ref{prop:cap}. 
\begin{proof}
    We prove Lemma \ref{lem:Ker=Im} (i). By using Lemma \ref{lem:tD}, we obtain the following: 
    \[
        (\Delta - \tau) \left( (\Delta^{-1} + \tau)(w) \right) 
        = \left( \mathrm{id} +\Delta \circ \tau -\tau \circ \Delta^{-1} -\mathrm{id} \right)(w) 
        = 0
    \]
    for any $w \in \Hoff\llbracket u_1, \ldots, u_s \rrbracket$. 
    Thus, we have $\mathrm{Ker}(\Delta - \tau) \supset \mathrm{Im}(\Delta^{-1} + \tau)$. 
    Next, reviewing $w = \Delta^{-1}(\tau(w))$ for any $w \in \mathrm{Ker}(\Delta - \tau)$, we have 
    \begin{align*}
        w &= \frac{1}{2} \left\{ \Delta^{-1}(\tau(w))+ w \right\} \\
        &= \frac{1}{2} \left\{ \Delta^{-1}(\tau(w)) + \tau(\tau(w)) \right\} \\
        &= (\Delta^{-1} + \tau) \left( \frac{1}{2}\tau(w) \right) \quad \in \mathrm{Im}(\Delta^{-1} + \tau). 
    \end{align*}
    Therefore, we have $\mathrm{Ker}(\Delta - \tau) \subset \mathrm{Im}(\Delta^{-1} + \tau)$, and so $\mathrm{Ker}(\Delta - \tau)$ and $\mathrm{Im}(\Delta^{-1} + \tau)$ are equal. 
    The proof of Lemma \ref{lem:Ker=Im} (ii) is similar and so will be omitted. 
\end{proof}
Using these properties, we prove Theorem \ref{thm:dual cap deri}. 
\begin{proof}[Proof of Theorem \ref{thm:dual cap deri}]
    From Theorem \ref{thm:Ker}, it is easy to see that the sets (ii) and (iii) are equal and that both of these sets are contained in the set (i), i.e., (i) $\supset$ (ii) $=$ (iii). 
    To prove (i) $\subset$ (ii), it suffices to check the following: 
    \begin{gather}
        (\tau - \mathrm{id})(\Hoff) \cap \partial (\Hoff) \;\subset\; (\tau -\mathrm{id})(\partial(\Hoff)), \label{inclusion 1} \\
        (\tau -\mathrm{id})(\partial(\Hoff)) \;\subset\; \mathrm{Coef}_{u_1, \ldots, u_s} \left( (\tau -\mathrm{id})(\D_{\Delta}) \right). \label{inclusion 2}
    \end{gather}
    First, we check Equation \eqref{inclusion 1}. 
    Each element of $(\tau - \mathrm{id})(\Hoff) \cap \partial (\Hoff)$ can be expressed as $(\tau -\mathrm{id})(w)$, where $w \in \Hoff$. 
    Then, there exists a certain positive integer $N$ such that $N$ satisfies 
    \[
        (\tau - \mathrm{id})(w) = \sum_{n=1}^{N} \partial_{n}(v_n), 
    \]
    where $v_n \in \Hoff$. 
    Applying the map $\tau -\mathrm{id}$ to both sides, we have
    \begin{align*}
        \sum_{n=1}^{N} (\tau -\mathrm{id}) \left( \partial_{n}(v_n) \right) 
        &= (\tau -\mathrm{id})^2(w) \\
        &= -2(\tau -\mathrm{id})(w). 
    \end{align*}
    Recalling that $(\tau -\mathrm{id}) \left( \partial(\Hoff) \right)$ is a $\Q$-linear space, we obtain $(\tau -\mathrm{id})(w) \in (\tau -\mathrm{id}) \left( \partial(\Hoff) \right)$. Therefore, Equation \eqref{inclusion 1} holds. 
    %
    Next, we prove Equation \eqref{inclusion 2}. 
    Since the set $(\tau -\mathrm{id})(\D_{\Delta})$ is a $\Q$-linear space, the set $\mathrm{Coef}_{u_1, \ldots, u_s} \left((\tau -\mathrm{id})(\D_{\Delta}) \right)$ is a $\Q$-linear space, too. 
    Reviewing that all $\theta_{n}(w)$ ($n \in \Z$, $w \in \Hoff$) are generators of $\partial(\Hoff)$ from Theorem \ref{thm:derivation}, it suffices to check that
    \begin{equation}
        (\tau -\mathrm{id}) \left( \theta_n(w) \right) \in \mathrm{Coef}_{u_1, \ldots, u_s} \left( (\tau -\mathrm{id})(\D_{\Delta}) \right) \label{eqn:generator}
    \end{equation}
    for any integer $n$ and $w \in \Hoff$. 
    We may assume $e_1 \neq 0$ without loss of generality. 
    First, in the case of $\Delta = {\Delta_{u_1}}^{e_1}$ ($e_1 > 0$), we prove Equation \eqref{eqn:generator} by induction on $n$. We will take the element such that one of its coefficients is $(\tau -\mathrm{id}) \left( \theta_n(w) \right)$. 
    By using Equation \eqref{eqn:Ker=D} and Lemma \ref{lem:Ker=Im} (i), we obtain
    \begin{equation}
        \D_{\Delta}
        = \mathrm{Ker}(\Delta - \tau) = \mathrm{Im}(\Delta^{-1} + \tau). \label{eqn:D=Im}
    \end{equation}
    %
    Thus, for any element $w \in \Hoff$, we have
    \[
        ({\Delta_{u_1}}^{-e_1} + \tau)(w) \in \D_{\Delta_{u_1}^{e_1}}. 
    \]
    Since the map $\tau$ is an anti-automorphism of $\Hoff$, let us replace $w$ with $\tau(w)$. 
    By using Lemma $\ref{lem:tD}$, we obtain
    \[
        ({\Delta_{u_1}}^{-e_1} + \tau)(\tau(w)) = (\tau \circ {\Delta_{u_1}}^{e_1} + \tau \circ \tau)(w) = -({\Delta_{u_1}}^{e_1} + \tau)(w). 
    \]
    The point is that $(\tau - \mathrm{id})(({\Delta_{u_1}}^{e_1} + \tau)(w))$ is exactly the element we wanted. 
    Now, in the expansion of ${\Delta_{u_1}}^{e_1}$, we consider the coefficients of ${u_1}^i$: 
    \begin{align*}
        {\Delta_{u_1}}^{e_1} &= \Biggl( \sum_{i \geq 0} \theta_i {u_1}^i \Biggr)^{e_1} \\
        &= \mathrm{id} + \binom{e_1}{1} \theta_1 u_1 + \left\{ \binom{e_1}{2} {\theta_1}^2 + \binom{e_1}{1}\theta_2 \right\} {u_1}^2 + \cdots \\
        &= \sum_{i \geq 0} \sum_{\substack{j_0 + j_1 + \cdots + j_i = e_1\\ j_1 + 2 \cdot j_2 \cdots + i \cdot j_i = i\\ j_0, j_1, \ldots, j_i \geq 0}} \; \frac{e_1!}{j_0! j_1! \cdots j_i!} {\mathrm{id}}^{j_0} \circ {\theta_1}^{j_1} \circ \cdots \circ {\theta_i}^{j_i} {u_1}^{i}, 
    \end{align*}
    where $\binom{e}{j}$ is the binomial coefficient. From the above, we find that the coefficient of $u_1$ is just $e_1 (\tau -\mathrm{id})( \theta_1(w) )$, and so the set membership holds for $n=1$ in the case of $\Delta = {\Delta_{u_1}}^{e_1}$. 
    %
    Suppose $n \geq 2$ and assume the following set membership holds for any $i < n$ in the case of $\Delta = {\Delta_{u_1}}^{e_1}$ ($e_1 > 0$): 
    \[
        (\tau - \mathrm{id})( \theta_i(w) ) \in \mathrm{Coef}_{u_1} ( (\tau -\mathrm{id})( \D_{\Delta_{u_1}^{e_1}} ) ). 
    \]
    Similarly, for an element $(\tau -\mathrm{id})\left( ( {\Delta_{u_1}}^{e_1} + \tau)(w) \right)$, the coefficient of ${u_1}^n$ is 
    \begin{equation*}
        e_1(\tau -\mathrm{id})( \theta_n(w) ) 
        + \sum_{\substack{j_0 + j_1 + \cdots + j_{n-1} = e_1\\ j_1 + 2 \cdot j_2 + \cdots + (n-1) \cdot j_{n-1} = n\\ j_0, j_1, \ldots, j_{n-1} \geq 0}}
            \frac{e_1!}{j_0! j_1! \cdots j_{n-1}!} (\tau -\mathrm{id}) \left( ({\theta_1}^{j_1} \circ \cdots \circ {\theta_{n-1}}^{j_{n-1}}) (w) \right). 
    \end{equation*}
    By the induction hypothesis, we see that the second term belongs to $\mathrm{Coef}_{u_1}(( \tau -\mathrm{id})( \D_{\Delta_{u_1}^{e_1}} ))$. Since $\mathrm{Coef}_{u_1}(( \tau -\mathrm{id})( \D_{\Delta_{u_1}^{e_1}} ))$ is a $\Q$-linear space, $e_1(\tau - \mathrm{id})(\theta_n(w)) \in \mathrm{Coef}_{u_1}(( \tau -\mathrm{id})( \D_{\Delta_{u_1}^{e_1}} ))$. 
    That is, Equation \eqref{eqn:generator} is valid in the case of $\Delta = {\Delta_{u_1}}^{e_1}$ ($e_1 > 0$). 
    %
    Second, in the case of $\Delta = {\Delta_{u_1}}^{e_1}$ ($e_1 < 0$), we can apply the same argument to coefficients of $(\tau -\mathrm{id})({\Delta_{u_1}}^{-e_1} + \tau)(w)$ taken from $(\tau -\mathrm{id})(\D_{\Delta_{u_1}^{e_1}})$. 
    Since $-e_1$ is also a positive integer, Equation \eqref{eqn:generator} is valid in the case of $\Delta = {\Delta_{u_1}}^{e_1}$ ($e_1 \neq 0$). 
    Finally, in the case of $\Delta = {\Delta_{u_1}}^{e_1} \circ \cdots \circ {\Delta_{u_s}}^{e_s}$ ($e_1, \ldots, e_s \in \mathbb{Z}$, $e_1 > 0$), let $v=(\Delta^{-1} + \tau)({\Delta_{u_2}}^{e_2} \circ \cdots \circ {\Delta_{u_s}}^{e_s}(\tau(w)))$. Then, the element $v$ belongs to $\D_{\Delta}$ because of Equation \eqref{eqn:D=Im}, and the element $v$ is easily transformed into ${\Delta_{u_1}}^{-e_1}(\tau(w)) + \tau\left( {\Delta_{u_2}}^{e_2} \circ \cdots \circ {\Delta_{u_s}}^{e_s}(\tau(w)) \right)$. 
    Now, we consider the coefficient of ${u_1}^i$ in the element $(\tau - \mathrm{id})(v)$. By using Lemma \ref{lem:tD}, we obtain 
    \[
        (\tau-\mathrm{id})\left( {\Delta_{u_1}}^{-e_1}(\tau(w)) \right) = (\tau-\mathrm{id})(\tau({\Delta_{u_1}}^{e_1}(w))) = -(\tau-\mathrm{id})({\Delta_{u_1}}^{e_1}(w)). 
    \]
    That is, the first term of $(\tau - \mathrm{id})(v)$ is $-(\tau-\mathrm{id})({\Delta_{u_1}}^{e_1}(w))$. 
    Since the second term of $v$ has no $u_1$, it suffices to consider the coefficient of ${u_1}^i$ in $-(\tau-\mathrm{id})({\Delta_{u_1}}^{e_1}(w))$. 
    By the same argument, Equation \eqref{eqn:generator} is valid in this case. 
    Similarly, in the case of $e_1 < 0$, this follows by replacing $\tau(w)$ with $w$. 
    Therefore, Equation \eqref{eqn:generator} holds, and Equation \eqref{inclusion 2} is obtained. 
\end{proof}
\appendix
\def\thesection{A}

\section{Equivalence of Previous Works and Corollary \ref{cor:explicit}} \label{sec:equivalence}
%
In this appendix, we show that Corollary \ref{cor:explicit} (i) is equivalent to the result of Kajikawa \cite{Kaji} and that Corollary \ref{cor:explicit} (iii) is equivalent to the result of Li \cite{Li}. We also mention the result obtained by Kawasaki and Tanaka \cite{KT} in Remark \ref{rem:KT}. 
To state the result of Kajikawa, let $\Hhat$ be the formal power series ring $\Q \llbracket x, y \rrbracket$ generated by the indeterminates $x$ and $y$ over $\Q$. 
\begin{thm}[Kajikawa \cite{Kaji}] \label{thm:Kaji}
    For positive integers $r$ and $d$ with $r \geq d$, we have the following identity on $\Hhat$: 
    \begin{align*}
        (\tau - \mathrm{id}) \! \left(
            \sum_{\substack{i_1 + \cdots + i_d = r \\ i_1, \ldots, i_d \geq 1}} \; 
                \prod_{j=1}^{d} \frac{x}{1-x}y^{i_j}
        \right) \!
        &= \sum_{m \geq 0} \theta_m \! \left(
            \sum_{\substack{i_1 + \cdots + i_d = r \\ i_1, \ldots, i_d \geq 1}} \; 
                \prod_{j=1}^{d}\left\{
                    \Bigl( x-\frac{x}{1-x}y \Bigr)^{i_j -1}\frac{x}{1-x}y
                \right\}
        \right) \! \\
        &-\sum_{n=0}^{r - d} \theta_n \left(
            \sum_{\substack{i_1 + \cdots + i_d = r-n\\ i_1, \ldots, i_d \geq 1}} \; 
                \prod_{j=1}^{d}\left\{
                    \Bigl( x-\frac{x}{1-x}y \Bigr)^{i_j -1}\frac{x}{1-x}y
                \right\}
        \right), 
    \end{align*}
where $\prod^{d}_{j=1} w_j = w_1 \cdots w_d$. 
\end{thm}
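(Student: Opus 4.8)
The plan is to derive Theorem~\ref{thm:Kaji} from Corollary~\ref{cor:explicit}(i) by a single specialization followed by one coefficient extraction; this realizes the equivalence between the two identities. Throughout set
\[
  v:=\left(\frac{x}{1-x}\,\frac{y}{1-yu_2}\right)^{d},\qquad
  A:=x-\frac{x}{1-x}y,\qquad
  B:=\frac{x}{1-x}y,
\]
and write $[u_2^{\,j}]F$ for the coefficient of $u_2^{\,j}$ in a series $F$. Corollary~\ref{cor:explicit}(i) is an identity in $\Hoff\llbracket u_1,u_2\rrbracket$, and the idea is to substitute $u_1=1$ (landing in $\Hhat\llbracket u_2\rrbracket$) and then read off the coefficient of $u_2^{\,r-d}$. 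The substitution is legitimate in the degree-completion: $\frac{x}{1-xu_1}=\sum_{p\ge1}x^{p}u_1^{p-1}$ contributes only finitely many terms in each total degree, and $\Delta_{u_1}|_{u_1=1}=\sum_{m\ge0}\theta_m$ (with $\theta_0:=\mathrm{id}$) converges because each $\theta_m$ raises the total degree by $m$.

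First I would handle the left-hand side. Since $\tau$ and every $\theta_m$ are $\Q\llbracket u_1,u_2\rrbracket$-linear, they commute both with the substitution $u_1=1$ and with $[u_2^{\,j}]$. After $u_1=1$, Corollary~\ref{cor:explicit}(i) becomes
\[
  (\tau-\mathrm{id})(v)=\sum_{m\ge0}\theta_m\bigl(\Delta_{u_2}^{-1}(v)\bigr)-v,
\]
so extracting $[u_2^{\,r-d}]$ produces $(\tau-\mathrm{id})\bigl([u_2^{\,r-d}]v\bigr)$ on the left. Expanding $\frac{y}{1-yu_2}=\sum_{q\ge1}y^{q}u_2^{\,q-1}$ gives $[u_2^{\,r-d}]v=\sum_{i_1+\cdots+i_d=r,\ i_j\ge1}\prod_{j}\frac{x}{1-x}y^{i_j}$, which is precisely the argument of $\tau-\mathrm{id}$ in Theorem~\ref{thm:Kaji}; hence the two left-hand sides coincide.

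The right-hand side rests on the key algebraic identity
\begin{equation}
  \Delta_{u_2}^{-1}\!\left(\frac{x}{1-x}\,\frac{y}{1-yu_2}\right)=\frac{1}{1-Au_2}\,B.\tag{$\star$}
\end{equation}
Granting $(\star)$ and using that $\Delta_{u_2}^{-1}$ is an automorphism, one gets $\Delta_{u_2}^{-1}(v)=\bigl(\frac{1}{1-Au_2}B\bigr)^{d}$, whence $[u_2^{\,r-d}]\Delta_{u_2}^{-1}(v)=\sum_{i_1+\cdots+i_d=r}\prod_j A^{i_j-1}B$ and $\sum_{m\ge0}\theta_m\bigl([u_2^{\,r-d}]\Delta_{u_2}^{-1}(v)\bigr)$ is exactly the first sum of Theorem~\ref{thm:Kaji}. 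For the remaining term I would apply $\Delta_{u_2}=\sum_{n\ge0}\theta_n u_2^{\,n}$ to $(\star)$ raised to the $d$-th power and use $\Delta_{u_2}\circ\Delta_{u_2}^{-1}=\mathrm{id}$, giving
\[
  [u_2^{\,r-d}]v=[u_2^{\,r-d}]\,\Delta_{u_2}\!\left(\Bigl(\tfrac{1}{1-Au_2}B\Bigr)^{d}\right)
  =\sum_{n=0}^{r-d}\theta_n\!\left(\sum_{i_1+\cdots+i_d=r-n}\prod_j A^{i_j-1}B\right),
\]
the range $0\le n\le r-d$ being forced since $\bigl(\frac{1}{1-Au_2}B\bigr)^{d}$ has no negative $u_2$-powers. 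Thus $[u_2^{\,r-d}]$ of the specialized right-hand side is (first sum)$-$(second sum), matching Theorem~\ref{thm:Kaji} term by term.

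It remains to prove $(\star)$, which I expect to be the main obstacle, as it is the only genuinely new computation. I would recast it through the recursion $R=B+u_2AR$ satisfied by $R:=\frac{1}{1-Au_2}B$: applying the automorphism $\Delta_{u_2}$ and writing $P:=\frac{x}{1-x}\frac{y}{1-yu_2}$, identity $(\star)$ is equivalent to the functional equation $P=\Delta_{u_2}(B)+u_2\,\Delta_{u_2}(A)\,P$. Since $\Delta_{u_2}(x+y)=x+y$, we have $A=\frac{x}{1-x}(1-x-y)$ and $\Delta_{u_2}(A)=\frac{\xi}{1-\xi}(1-x-y)$ with $\xi:=x\frac{1}{1-yu_2}$, so after factoring out $\frac{\xi}{1-\xi}$ the equation reduces to showing that $\Delta_{u_2}(y)+u_2(1-x-y)P$ collapses to $\bigl(1-\frac{yu_2}{1-x}\bigr)\frac{y}{1-yu_2}$ and then to the elementary identity
\[
  \frac{\xi}{1-\xi}\left(1-\frac{yu_2}{1-x}\right)=\frac{x}{1-x}.
\]
This last identity follows by multiplying on the left by $1-\xi$, regrouping via $1+\frac{x}{1-x}=\frac{1}{1-x}$, and using $\frac{1}{1-yu_2}(1-yu_2)=1$ together with $\xi=x\frac{1}{1-yu_2}$. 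The only delicate point is keeping the noncommuting factors in the correct order; once the reduction above is in place the verification is short.
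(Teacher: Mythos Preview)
Your proof is correct and follows the same overall strategy as the paper: both derive Theorem~\ref{thm:Kaji} from Corollary~\ref{cor:explicit}(i) by extracting the coefficient of $u_2^{\,r-d}$, identifying the $\Delta_{u_2}^{-1}$-image of the argument as $\bigl(\tfrac{1}{1-Au_2}B\bigr)^{d}$, and rewriting the identity part via $\Delta_{u_2}\circ\Delta_{u_2}^{-1}=\mathrm{id}$ to produce the finite $\theta_n$-sum.

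The tactical differences are minor but worth noting. You specialize $u_1=1$ and work in the completion $\Hhat\llbracket u_2\rrbracket$, whereas the paper keeps $u_1$ as a formal degree-tracking variable throughout and only at the end observes that the resulting identity in $\Hoff\llbracket u_1\rrbracket$ is the generating function of Theorem~\ref{thm:Kaji} by total degree; this sidesteps any discussion of convergence. For the key step $(\star)$, the paper exploits the clean swap identity
\[
  \Delta_{u_1}\circ\Delta_{u_2}^{-1}\!\left(\frac{x}{1-xu_1}\,\frac{y}{1-yu_2}\right)=\frac{x}{1-xu_2}\,\frac{y}{1-yu_1},
\]
then expands in $u_2$ and pulls out $\Delta_{u_1}$ to read off the $A^{i_j-1}B$ factors directly (using $\Delta_{u_1}^{-1}(x)=x-\tfrac{x}{1-xu_1}yu_1$ and $x\Delta_{u_1}^{-1}(y)=\tfrac{x}{1-xu_1}y$). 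This is shorter than your functional-equation verification of $(\star)$, though yours is self-contained and avoids relying on the $u_1\leftrightarrow u_2$ symmetry. Either route yields the same three pieces \eqref{eqn:Kaji_first}--\eqref{eqn:Kaji_third} and hence the theorem.
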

\begin{prop} \label{prop:equivalent}
    Corollary \ref{cor:explicit} (i) and Theorem \ref{thm:Kaji} are equivalent. 
\end{prop}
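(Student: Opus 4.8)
The plan is to show that Corollary \ref{cor:explicit} (i) and Theorem \ref{thm:Kaji} are two presentations of the same identity, by transforming one into the other through an explicit change of variables and a reindexing of the generating series. Both statements are equalities in a formal power series ring, so the strategy is to expand each side as a series in the relevant indeterminates and match coefficients. The natural bridge is the substitution that turns the rational generating functions appearing in Corollary \ref{cor:explicit} (i)—namely $\tfrac{x}{1-xu_1}\tfrac{y}{1-yu_2}$—into the sums $\sum_{i_1+\cdots+i_d=r}\prod_j \tfrac{x}{1-x}y^{i_j}$ appearing in Theorem \ref{thm:Kaji}. I would first set $u_1 = u_2 = 1$ (or track the weight-grading via a single bookkeeping variable) so that $\tfrac{x}{1-xu_1} \mapsto \tfrac{x}{1-x}$ and $\tfrac{y}{1-yu_2}\mapsto \tfrac{y}{1-y}$, and then expand the $d$-th power to recognize the depth-$d$ sum over compositions.

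First I would rewrite the left-hand side of Corollary \ref{cor:explicit} (i). Expanding $\bigl(\tfrac{x}{1-xu_1}\tfrac{y}{1-yu_2}\bigr)^d$ and collecting by the degrees in $u_1$ and $u_2$ should reproduce, after the specialization, the inner sum $\sum_{i_1+\cdots+i_d=r}\prod_{j=1}^d \tfrac{x}{1-x}y^{i_j}$ of Theorem \ref{thm:Kaji}, where $r$ is read off from the $u_2$-degree and the $u_1$-degree records the weight contribution. The key algebraic fact I would verify is that $\Delta_{u_1}\circ\Delta_{u_2}^{-1}$, under this specialization and after passing to the coefficient ring, reduces to $\sum_{m\geq 0}\theta_m$ on the appropriately transformed argument. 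This is where the explicit formulas $\Delta_u(x)=x\tfrac{1}{1-yu}$ and $\Delta_u^{-1}(y)=\tfrac{1}{1-xu}y$ enter: I would compute $\Delta_{u_1}\circ\Delta_{u_2}^{-1}$ on the generators, check that it carries $\tfrac{x}{1-xu_1}\tfrac{y}{1-yu_2}$ to an expression matching $\bigl(x-\tfrac{x}{1-x}y\bigr)^{i_j-1}\tfrac{x}{1-x}y$, and confirm that the two $\theta$-sums on the right of Theorem \ref{thm:Kaji} correspond to the $+\Delta$ and $-\mathrm{id}$ pieces of $(\Delta-\mathrm{id})$ in Corollary \ref{cor:explicit} (i).

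The main obstacle I anticipate is bookkeeping the two-variable grading correctly: Corollary \ref{cor:explicit} (i) carries information in both $u_1$ and $u_2$, whereas Theorem \ref{thm:Kaji} has a single weight parameter $r$ together with the finite truncation $\sum_{n=0}^{r-d}$ on its second $\theta$-sum. Reconciling that finite range with the infinite sum $\sum_{m\geq 0}\theta_m$ on the first line requires showing that all terms beyond the relevant weight vanish after specialization, which follows because $\theta_m$ raises degree by $m$ and the ambient monomials have bounded weight in each graded piece. I would therefore argue that, fixing the total weight, only finitely many $\theta_m$ contribute, and the surviving range matches $0\leq n\leq r-d$ exactly; the constraint $r\geq d$ in Theorem \ref{thm:Kaji} corresponds precisely to the nonnegativity $i_j\geq 1$ forcing $r\geq d$. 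Having matched the graded pieces on both sides, the equivalence follows by equating coefficients, and conversely any instance of Theorem \ref{thm:Kaji} is recovered by reading the weight-$r$ part of Corollary \ref{cor:explicit} (i); I would present both directions to establish genuine equivalence rather than a one-sided implication.
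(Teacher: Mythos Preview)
Your overall strategy---expand both sides as generating series and match coefficients---is the same as the paper's, but the execution has a genuine gap. Specializing $u_1=u_2=1$ collapses the family: Kajikawa's theorem is a statement for \emph{each} pair $(r,d)$, and the parameter $r$ is encoded precisely by the $u_2$-degree. If you set $u_2=1$ you sum over all $r$ at once and can no longer recover the individual identities, so the ``equivalence'' direction from Corollary \ref{cor:explicit} (i) back to Theorem \ref{thm:Kaji} fails. Your parenthetical ``or track the weight-grading via a single bookkeeping variable'' is closer to what is needed, but you do not develop it.

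What the paper does instead is keep $u_1$ throughout and expand only in $u_2$. The coefficient of $u_2^{\,r-d}$ on the left is then $\sum_{i_1+\cdots+i_d=r}\prod_j\frac{x}{1-xu_1}y^{i_j}$, which is exactly the degree-graded version of Kajikawa's left-hand side (setting $u_1=1$ recovers $\frac{x}{1-x}$ in $\Hhat$). For the right-hand side the paper exploits a symmetry you do not mention: applying $\Delta_{u_1}\circ\Delta_{u_2}^{-1}$ to $\bigl(\frac{x}{1-xu_1}\frac{y}{1-yu_2}\bigr)^d$ simply swaps $u_1\leftrightarrow u_2$, giving $\bigl(\frac{x}{1-xu_2}\frac{y}{1-yu_1}\bigr)^d$. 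Both this and the original argument are then rewritten so that the $u_2$-dependence is isolated as an explicit action of $\Delta_{u_2}=\sum_n\theta_n u_2^n$ (for the $-\mathrm{id}$ piece) or an outer $\Delta_{u_1}$ (for the $\Delta_{u_1}\Delta_{u_2}^{-1}$ piece) on an expression depending on $u_1$ alone. Extracting the $u_2^{\,r-d}$ coefficient then produces the two $\theta$-sums in Kajikawa, and the finite range $0\le n\le r-d$ falls out automatically from the $u_2$-grading---not from a separate vanishing argument as you suggest. You should replace the $u_1=u_2=1$ substitution with this ``expand in $u_2$, keep $u_1$'' scheme and identify the swap symmetry explicitly.
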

\begin{proof}
    We prove that Corollary \ref{cor:explicit} (i) is exactly the generating function of Theorem \ref{thm:Kaji} with respect to degree. 
    First, for the left-hand side of Corollary \ref{cor:explicit} (i), we have  
    \begin{align}
        (\tau - \mathrm{id}) \biggl( \frac{x}{1-xu_1} \frac{y}{1-yu_2} \biggr)^{d} 
        &= (\tau - \mathrm{id}) \left( \sum_{r_1 \geq 0} \frac{x}{1-xu_1} y(yu_2)^{r_1}
            \cdots \sum_{r_d \geq 0} \frac{x}{1-xu_1} y(yu_2)^{r_d} \right) \notag \\
        &= \sum_{r \geq 0} (\tau - \mathrm{id}) \Biggl( \sum_{\substack{r_1 + \cdots + r_d = r\\ r_1, \ldots, r_d \geq 0}}
            \prod_{j=1}^{d} \frac{x}{1-xu_1} y^{r_j +1}
        \Biggr) {u_2}^{r} \notag \\
        &= \sum_{r \geq 0} (\tau - \mathrm{id}) \Biggl( \sum_{\substack{r_1 + \cdots + r_d = r+d\\ r_1, \ldots, r_d \geq 1}}
            \; \prod_{j=1}^{d} \frac{x}{1-xu_1} y^{r_j} \Biggr)
        {u_2}^{r} \notag \\
        &= \sum_{r \geq d} (\tau - \mathrm{id}) \Biggl( \sum_{\substack{r_1 + \cdots + r_d = r\\ r_1, \ldots, r_d \geq 1}}
            \; \prod_{j=1}^{d} \frac{x}{1-xu_1} y^{r_j} \Biggr)
        {u_2}^{r-d}. \label{eqn:Kaji_first}
    \end{align}
    Second, the part of $\Delta_{u_1} \circ {\Delta_{u_2}}^{-1}$ on the right-hand side is transformed as follows: 
    \begin{align}
        &\Delta_{u_1} \circ {\Delta_{u_2}}^{-1} \left( \Bigl(\frac{x}{1-xu_1} \frac{y}{1-yu_2} \Bigr)^{d} \right) \notag \\
        &= \Bigl( \frac{x}{1-xu_2} \frac{y}{1-yu_1} \Bigr)^{d} \notag \\
        &= \sum_{r \geq 0} \; \sum_{\substack{i_1 + \cdots + i_d = r\\ i_1, \ldots, i_d \geq 0}} \; 
            \prod_{j=1}^{d} \left(x^{i_j +1} \frac{y}{1-yu_1} \right)
        {u_2}^r \notag \\
        &= \sum_{r \geq 0} \; 
            \sum_{\substack{i_1 + \cdots + i_d = r \\ i_1, \ldots, i_d \geq 0}} \;
                \prod_{j=1}^{d} \Delta_{u_1} \left( \Bigl( {\Delta_{u_1}}^{-1}(x) \Bigr)^{i_j} x {\Delta_{u_1}}^{-1}(y) \right) {u_2}^r \notag \\
        &= \sum_{r \geq d}
            \Delta_{u_1} \Biggl(
                \sum_{\substack{i_1 + \cdots  + i_d = r \\ i_1, \ldots, i_d \geq 1}} \; 
                    \prod_{j=1}^{d} \left\{ \Bigl( x -\frac{x}{1-xu_1}yu_1 \Bigr)^{i_j -1} \frac{x}{1-xu_1}y \right\}
        \Biggr){u_2}^{r-d}. \label{eqn:Kaji_second}
    \end{align}
    Finally, reviewing $\Delta_{u_2} = \sum_{n \geq 0} \theta_n {u_2}^{n}$, we transform the identity map part on the right-hand side: 
    \begin{align}
        &\biggl( \frac{x}{1-xu_1} \frac{y}{1-yu_2} \biggr)^{d} \notag \\
        &=\Delta_{u_2} \circ {\Delta_{u_1}}^{-1} \left( \Bigl( \frac{x}{1-xu_2} \frac{y}{1-yu_1} \Bigr)^{d} \right) \notag \\
        &=\Delta_{u_2} \left( \Bigl(
            \frac{1}{1-{\Delta_{u_1}}^{-1}(x)u_2} x {\Delta_{u_1}}^{-1}(y)
        \Bigr)^{d} \right) \notag \\
        &= \Delta_{u_2} \Biggl( \sum_{r \geq 0} \sum_{\substack{i_1 + \cdots i_d = r \\ i_1, \ldots, i_d \geq 0}} \; 
            \prod_{j=1}^{d} \left\{ \left( {\Delta_{u_1}}^{-1}(x) \right)^{i_j} x {\Delta_{u_1}}^{-1}(y) \right\}{u_2}^{r}
        \Biggr) \notag \\
        &= \sum_{r \geq 0} \sum_{n \geq 0} \theta_n \Biggl( \sum_{\substack{i_1 + \cdots i_d = r \\ i_1, \ldots, i_d \geq 0}} \; 
            \prod_{j=1}^{d} \left\{ {\Delta_{u_1}}^{-1}(x^{i_j}) x {\Delta_{u_1}}^{-1}(y) \right\}
        \Biggr) {u_2}^{r+n} \notag \\
        &=\sum_{r \geq d}\sum_{n = 0}^{r-d} \theta_{n} \Biggl( \sum_{\substack{i_1 + \cdots i_d= r-n\\ i_1, \ldots, i_d \geq 1}} \; 
            \prod_{j=1}^{d} \left\{ {\Delta_{u_1}}^{-1}(x^{i_j -1}) x {\Delta_{u_1}}^{-1}(y) 
            \right\}
        \Biggr){u_2}^{r-d} \notag \\
        &= \sum_{r \geq d} \sum_{n = 0}^{r-d} \theta_{n} \Biggl(
                \sum_{\substack{i_1 + \cdots i_d= r-n\\ i_1, \ldots, i_d \geq 1}} \; 
                \prod_{j=1}^{d} \left\{ \left(x- \frac{x}{1-xu_1}yu_1 \right)^{i_j -1} \frac{x}{1-xu_1}y \right\}
        \Biggr){u_2}^{r-d}. \label{eqn:Kaji_third}
    \end{align}
    Substituting \eqref{eqn:Kaji_first}, \eqref{eqn:Kaji_second}, and \eqref{eqn:Kaji_third} into Corollary \ref{cor:explicit} (i), we obtain the identity composed of these generating functions. Then, we focus on the coefficient of ${u_2}^{r-d}$:
    \begin{multline*}
        (\tau - \mathrm{id})\Biggl(
            \sum_{\substack{i_1 + \cdots + i_d = r \\ i_1, \ldots, i_d \geq 1}} \; 
                \prod_{j=1}^{d} \frac{x}{1-xu_1}y^{i_j}
        \Biggr) \\
        = \sum_{m \geq 0} \theta_m {u_1}^m \Biggl(
            \sum_{\substack{i_1 + \cdots + i_d = r \\ i_1, \ldots, i_d \geq 1}} \; 
                \prod_{j=1}^{d}\left\{
                    \Bigl( x-\frac{x}{1-xu_1}yu_1 \Bigr)^{i_j -1}\frac{x}{1-xu_1}y
                \right\}
        \Biggr) \\
        -\sum_{n=0}^{r - d} \theta_n \Biggl(
            \sum_{\substack{i_1 + \cdots + i_d = r-n\\ i_1, \ldots, i_d \geq 1}} \; 
                \prod_{j=1}^{d}\left\{
                    \Bigl( x-\frac{x}{1-xu_1}yu_1 \Bigr)^{i_j -1}\frac{x}{1-xu_1}y
                \right\}
        \Biggr). 
    \end{multline*}
    This identity is exactly the generating function of Theorem \ref{thm:Kaji} with respect to degree. 
\end{proof}
\begin{thm}[Li \cite{Li}] \label{thm:Li}
    On $\Hoff^{0}[[u_1, u_2, u_3]]$, we have
    \begin{align*}
        &(\tau - \mathrm{id})\left(\frac{x}{1-xu_1}y \frac{1}{1-xu_3-yu_2}y\right) \\
        &= -\frac{1}{u_2-u_3}(\Delta_{u_2} - \Delta_{u_3}) \biggl(
            x\frac{1}{1-xu_1-xu_2+(x^2+yx)u_1u_2}y \frac{1}{1-xu_3}(1-xu_3-yu_3)
        \biggr)\\
        &\quad -(1 - \Delta_{u_1}) \biggl(
            x\frac{1}{1-xu_1-xu_2+(x^2+yx)u_1 u_2-yu_3} (1-xu_1-yu_1) \frac{x}{1-xu_1}y
        \biggr). 
    \end{align*}
\end{thm}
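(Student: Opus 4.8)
The plan is to deduce Theorem \ref{thm:Li} from Corollary \ref{cor:explicit} (iii), which is already established as a consequence of the key equation \eqref{eqn:DD} in Theorem \ref{thm:Ker}; both are identities in the same ring $\Hoff^0\llbracket u_1, u_2, u_3\rrbracket$, so the task is to exhibit an explicit dictionary translating one into the other. This mirrors the strategy used for Kajikawa's theorem in Proposition \ref{prop:equivalent}: rewrite every factor on both sides of Corollary \ref{cor:explicit} (iii) by means of the closed formulas for $\Delta_{u_i}$ and ${\Delta_{u_i}}^{-1}$ on the generators, and verify that the outcome is precisely Li's identity.

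The first step is to line up the two left-hand sides. Using the resolvent identity $\frac{1}{1-xu_3-yu_2} = \frac{1}{1-\frac{1}{1-xu_3}yu_2}\frac{1}{1-xu_3}$ together with $\frac{xu_1}{1-xu_1} = u_1\frac{x}{1-xu_1}$, the argument of $\tau-\mathrm{id}$ in Corollary \ref{cor:explicit} (iii) is seen to be exactly $u_1$ times the argument of $\tau-\mathrm{id}$ in Theorem \ref{thm:Li}. Since $\tau$ fixes each $u_i$, the operator $\tau-\mathrm{id}$ is $\Q\llbracket u_1,u_2,u_3\rrbracket$-linear, so the left-hand side of Corollary \ref{cor:explicit} (iii) equals $u_1$ times the left-hand side of Theorem \ref{thm:Li}. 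It therefore suffices to show that the right-hand side of Corollary \ref{cor:explicit} (iii) is divisible by $u_1$ and that, after dividing by $u_1$, it coincides with the right-hand side of Theorem \ref{thm:Li}. Divisibility is quickly confirmed: at $u_1=0$ the two summands of Corollary \ref{cor:explicit} (iii) cancel, since the front factor ${\Delta_{u_1}}^{-1}(x)$ and the operator $\Delta_{u_1}\circ{\Delta_{u_2}}^{-1}\circ\Delta_{u_3}$ both reduce to their $u_1$-free parts there, leaving two equal terms of opposite sign.

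The bulk of the work is the matching of the two right-hand sides. I would expand the composite automorphisms on the generators; the rational denominator $1-xu_1-xu_2+(x^2+yx)u_1u_2$ occurring in Li's statement factors as $1-xu_1-(1-(x+y)u_1)xu_2$, which reflects the nesting of the two geometric series produced by composing $\Delta_{u_1}$ with ${\Delta_{u_2}}^{-1}$, while the factors $(1-xu_i-yu_i)$ and $\frac{x}{1-xu_i}$ are recognized as $\Delta_{u_i}^{\pm1}$-images of $x$ and $y$. The step I expect to be the main obstacle is reconciling the symmetric difference operator $\frac{1}{u_2-u_3}(\Delta_{u_2}-\Delta_{u_3})$ appearing in Theorem \ref{thm:Li} with the composite ${\Delta_{u_2}}^{-1}\circ\Delta_{u_3}$ entering Corollary \ref{cor:explicit} (iii). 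Writing $\frac{1}{u_2-u_3}(\Delta_{u_2}-\Delta_{u_3}) = \sum_{i\geq1}\theta_i(u_2^{i-1}+u_2^{i-2}u_3+\cdots+u_3^{i-1})$ shows this is a genuine power-series operator, and the heart of the proof is to verify, by comparing the coefficient of each monomial $u_1^a u_2^b u_3^c$, that combining the two $\Delta$-terms of Corollary \ref{cor:explicit} (iii) and redistributing ${\Delta_{u_2}}^{-1}$ over a single geometric sum collapses them into exactly this symmetrized operator together with the separate $(1-\Delta_{u_1})$ contribution. Once this single reconciliation is in place, the remaining identifications are routine bookkeeping of rational expressions, and dividing through by $u_1$ yields Theorem \ref{thm:Li}; since Corollary \ref{cor:explicit} (iii) holds by Theorem \ref{thm:Ker}, this completes the proof.
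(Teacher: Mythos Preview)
Your approach coincides with the paper's: Theorem \ref{thm:Li} is quoted from \cite{Li}, and the paper's own contribution (Proposition \ref{prop:equivalent2}) is precisely the claim that Corollary \ref{cor:explicit} (iii) is Theorem \ref{thm:Li} multiplied by $u_1$, established via the same resolvent identity \eqref{eqn:XY} you invoke for the left-hand sides. The paper's proof of Proposition \ref{prop:equivalent2} in fact verifies only the left-hand sides and leaves the right-hand side comparison entirely unwritten, so your sketch of that reconciliation---in particular the passage from the composite ${\Delta_{u_2}}^{-1}\circ\Delta_{u_3}$ to Li's symmetric difference $\frac{1}{u_2-u_3}(\Delta_{u_2}-\Delta_{u_3})$---already goes beyond what the paper supplies, though neither you nor the paper actually carries that computation out.
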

\begin{prop} \label{prop:equivalent2}
    Corollary \ref{cor:explicit} (iii) and Theorem \ref{thm:Li} are equivalent. 
\end{prop}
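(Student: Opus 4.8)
The plan is to adapt the strategy of Proposition~\ref{prop:equivalent}: I will expand both identities into explicit power series in $x,y,u_1,u_2,u_3$ using the closed forms for $\Delta_{u_i}(x)$, $\Delta_{u_i}(y)$ and their inverses recorded in Section~\ref{sec:preparation}, and check that they agree. Because Theorem~\ref{thm:Li} already lives in $\Hoff^{0}\llbracket u_1,u_2,u_3\rrbracket$, the same three-variable ring as Corollary~\ref{cor:explicit}~(iii), I expect the equivalence to follow from a direct algebraic transformation of one identity into the other rather than from coefficient extraction in a single generating variable.

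First I would normalize the left-hand sides. The factorization $(1-xu_3)\bigl(1-\tfrac{1}{1-xu_3}yu_2\bigr)=1-xu_3-yu_2$ gives $\tfrac{1}{1-\frac{1}{1-xu_3}yu_2}\tfrac{1}{1-xu_3}=\tfrac{1}{1-xu_3-yu_2}$, which turns every middle factor of Corollary~\ref{cor:explicit}~(iii) into Li's $\tfrac{1}{1-xu_3-yu_2}$. Together with $\tfrac{xu_1}{1-xu_1}=u_1\,\tfrac{x}{1-xu_1}$ this shows that the left-hand side of Corollary~\ref{cor:explicit}~(iii) equals $u_1$ times the left-hand side of Theorem~\ref{thm:Li}; since $\tau-\mathrm{id}$ is $\Q\llbracket u_1,u_2,u_3\rrbracket$-linear, the scalar $u_1$ pulls out. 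As the right-hand side of Corollary~\ref{cor:explicit}~(iii) vanishes at $u_1=0$ (its two summands coincide there), it too is divisible by $u_1$, so the whole claim reduces to matching the two right-hand sides up to this common factor $u_1$.

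Next I would simplify the operator side. Recognizing the front factor $\tfrac{x}{1-xu_1}(1-xu_1-yu_1)={\Delta_{u_1}}^{-1}(x)$ and the common tail, the right-hand side of Corollary~\ref{cor:explicit}~(iii) reduces, after moving $\Delta_{u_1}$ through the commuting factors (Proposition~\ref{prop:commutative}) and using $\Delta_{u_1}({\Delta_{u_1}}^{-1}(x))=x$, to
\[
{\Delta_{u_2}}^{-1}\!\circ\Delta_{u_3}\bigl(x\,(\mathrm{id}-\Delta_{u_1})(T)\bigr)\;+\;({\Delta_{u_1}}^{-1}-\mathrm{id})(x)\,T,\qquad T:=\tfrac{1}{1-xu_3-yu_2}\,y.
\]
The task then becomes to show that this expression, divided by $u_1$, equals the right-hand side of Theorem~\ref{thm:Li}. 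For the denominators I would use $1-xu_1-xu_2+(x^2+yx)u_1u_2=(1-xu_1)(1-xu_2)+yx\,u_1u_2$ and rewrite Li's arguments through the explicit images ${\Delta_{u_i}}^{\pm1}(x)$, ${\Delta_{u_i}}^{\pm1}(y)$.

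The hard part will be reconciling the two operator presentations. Corollary~\ref{cor:explicit}~(iii) produces the composite ${\Delta_{u_2}}^{-1}\!\circ\Delta_{u_3}$ and the factor ${\Delta_{u_1}}^{-1}-\mathrm{id}$, whereas Theorem~\ref{thm:Li} is written with the divided difference $\tfrac{1}{u_2-u_3}(\Delta_{u_2}-\Delta_{u_3})$ and with $1-\Delta_{u_1}$. I expect the $\tfrac{1}{u_2-u_3}$ to emerge when ${\Delta_{u_2}}^{-1}\!\circ\Delta_{u_3}$ is applied to the rational tail $T$ and the resulting geometric series in $u_2$ and $u_3$ are regrouped (divided differences of geometric series are again rational); likewise the passage from ${\Delta_{u_1}}^{-1}-\mathrm{id}$ to $1-\Delta_{u_1}$, and the left-versus-right placement of the factor $1-xu_1-yu_1$, should be handled via Lemma~\ref{lem:tD} and the anti-automorphism property of $\tau$. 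Tracking this noncommutative rational algebra carefully — in particular producing the $\tfrac{1}{u_2-u_3}$ cleanly — is the technical heart of the argument; once the two right-hand sides are in a common explicit form, the comparison is routine.
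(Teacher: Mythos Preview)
Your normalization of the left-hand side is exactly the paper's argument: it proves the same rational identity $\tfrac{1}{1-\frac{1}{1-xu_3}yu_2}\,\tfrac{1}{1-xu_3}=\tfrac{1}{1-xu_3-yu_2}$ via the factorization $(1-xu_3)\bigl(1-\tfrac{1}{1-xu_3}yu_2\bigr)=1-xu_3-yu_2$, and concludes that the left-hand side of Corollary~\ref{cor:explicit}~(iii) equals $u_1$ times the left-hand side of Theorem~\ref{thm:Li}.

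The divergence is one of scope. The paper's proof ends right there: after announcing that Corollary~\ref{cor:explicit}~(iii) is Theorem~\ref{thm:Li} multiplied through by $u_1$ ``on both sides,'' it writes out only the left-hand-side verification and closes the proof without ever manipulating the right-hand sides. Everything you propose from the operator-side simplification onward --- your (correct) reduction of the right-hand side of Corollary~\ref{cor:explicit}~(iii) to $\Delta_{u_2}^{-1}\!\circ\Delta_{u_3}\bigl(x\,(\mathrm{id}-\Delta_{u_1})(T)\bigr)+({\Delta_{u_1}}^{-1}-\mathrm{id})(x)\,T$, the divisibility-by-$u_1$ observation, and the plan to convert $\Delta_{u_2}^{-1}\!\circ\Delta_{u_3}$ into Li's divided difference $\tfrac{1}{u_2-u_3}(\Delta_{u_2}-\Delta_{u_3})$ --- goes well beyond what the paper actually carries out. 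Your outline for this part is plausible, though the appeal to Lemma~\ref{lem:tD} and the anti-automorphism $\tau$ for the passage between ${\Delta_{u_1}}^{-1}-\mathrm{id}$ and $1-\Delta_{u_1}$ looks misplaced: $\tau$ does not appear on either right-hand side, and a direct rational expansion (noting for instance that $({\Delta_{u_1}}^{-1}-\mathrm{id})(x)=-u_1\,\tfrac{x}{1-xu_1}\,y$) is the natural route. In short, the part of your proposal that overlaps with the paper matches it exactly, and what you flag as the ``technical heart'' is work the paper simply leaves implicit.
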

\begin{proof}
    We check that Corollary \ref{cor:explicit} (iii) is exactly Theorem \ref{thm:Li} multiplied by $u_1$ on both sides. 
    To prove this proposition, we show the following: 
    \begin{equation}
        \frac{1}{1- \frac{1}{1-xu_3}yu_2} \frac{1}{1-xu_3} = \frac{1}{1 -xu_3 -yu_2}.  \label{eqn:XY}
    \end{equation}
    It is easy to see that $(1-xu_3)\left(1- \frac{1}{1-xu_3}yu_2 \right) = 1-xu_3-yu_2$. 
    Thus, we obtain
    \begin{align*}
        \frac{1}{1-xu_3-yu_2} &= \frac{1}{1-\frac{1}{1-xu_3}yu_2}\frac{1}{1-xu_3} (1-xu_3)\left(1- \frac{1}{1-xu_3}yu_2 \right) \frac{1}{1-xu_3-yu_2} \\
        &= \frac{1}{1-\frac{1}{1-xu_3}yu_2}\frac{1}{1-xu_3} (1-xu_3-yu_2) \frac{1}{1-xu_3-yu_2} \\
        &= \frac{1}{1-\frac{1}{1-xu_3}yu_2}\frac{1}{1-xu_3}. 
    \end{align*}
    Therefore, the left-hand side of Corollary \ref{cor:explicit} (iii) is transformed as follows: 
    \begin{equation*}
        (\tau - \mathrm{id}) \left( \frac{xu_1}{1-xu_1} y \frac{1}{1- \frac{1}{1-xu_3}yu_2} \frac{1}{1-xu_3}y \right) 
        = (\tau - \mathrm{id}) \left( \frac{x}{1-xu_1} y \frac{1}{1 - xu_3 - yu_2}y \right) u_1. 
    \end{equation*}
    The above is exactly the left-hand side of Theorem A.3 multiplied by u1. 
\end{proof}
\begin{rem}\upshape \label{rem:KT}
Similar to the method mentioned in Li \cite{Li}, formally multiplying Corollary \ref{cor:explicit} (iii) by $1/u_1$ and setting $u_1=0$ (or $u_2=0$), we obtain the generating function of the result given by Kawasaki and Tanaka \cite[Theorem (ii) (or (i))]{KT}. 
\end{rem}
%
\appendix
\def\thesection{B}
\section{Property Related to the Characterization} \label{sec:some properties}
\newcommand{\LW}{\mathrm{LW}}
In this appendix, we show that there is no inclusion relation between $\D_\Delta$ and $\D_{\Delta'}$ (Corollary \ref{cor:cap}). 
\begin{prop}[] \label{prop:cap}
    Let $(e_1, \ldots, e_s), (f_1, \ldots, f_s) \in \mathbb{Z}^s$ with $(e_1, \ldots, e_s) \neq (f_1, \ldots, f_s)$, $\Delta = {\Delta_{u_1}}^{e_1} \circ \cdots \circ {\Delta_{u_s}}^{e_s}$ and 
    $\Delta' = {\Delta_{u_1}}^{f_1} \circ \cdots \circ {\Delta_{u_s}}^{f_s}$. 
    Then, we have the following on $\Hoff\llbracket u_1, \ldots, u_s\rrbracket $: 
    \begin{enumerate}
        \item[(i)] $\mathrm{Im}(\Delta -\tau) \cap \mathrm{Im}(\Delta' -\tau) = \{0\}. $
        \item[(ii)] $\mathrm{Ker}(\Delta -\tau) \cap \mathrm{Ker}(\Delta' -\tau) = (\Q[z])\llbracket u_1, \ldots, u_s\rrbracket , $
    \end{enumerate}
    where $z = x+y$. 
\end{prop}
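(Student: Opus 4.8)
The plan is to reduce both statements to a single fixed-point computation. Write $\tilde{\Delta} := \Delta^{-1} \circ \Delta' = {\Delta_{u_1}}^{f_1-e_1} \circ \cdots \circ {\Delta_{u_s}}^{f_s-e_s}$, which is nontrivial because $(e_1,\ldots,e_s) \neq (f_1,\ldots,f_s)$. Lemma \ref{lem:tD} gives the conjugation relation $\tau \circ \Delta \circ \tau = \Delta^{-1}$ (and the same for $\Delta'$), equivalently $\tau \circ \Delta = \Delta^{-1} \circ \tau$. For (ii), if $w \in \mathrm{Ker}(\Delta - \tau) \cap \mathrm{Ker}(\Delta' - \tau)$ then $\Delta(w) = \tau(w) = \Delta'(w)$, so $\tilde{\Delta}(w) = w$; conversely every $w \in (\Q[z])\llbracket u_1,\ldots,u_s\rrbracket$ is fixed by $\Delta$, $\Delta'$, and $\tau$ (all fix $z$ and each $u_i$), hence lies in the intersection. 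Thus (ii) is equivalent to the claim that the $\tilde{\Delta}$-fixed elements are precisely $(\Q[z])\llbracket u_1,\ldots,u_s\rrbracket$.

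For (i), I first rewrite the images as kernels: applying Lemma \ref{lem:Ker=Im}(ii) with $\Delta$ replaced by $\Delta^{-1}$ yields $\mathrm{Im}(\Delta - \tau) = \mathrm{Ker}(\Delta^{-1} + \tau)$, and likewise $\mathrm{Im}(\Delta' - \tau) = \mathrm{Ker}(\Delta'^{-1} + \tau)$. Hence any $w$ in the intersection satisfies $\Delta^{-1}(w) = -\tau(w) = \Delta'^{-1}(w)$, which again forces $\tilde{\Delta}(w) = w$. Once the fixed-point claim gives $w \in (\Q[z])\llbracket u_1,\ldots,u_s\rrbracket$, the relation $\Delta^{-1}(w) = -\tau(w)$ becomes $w = -w$, so $w = 0$; the reverse inclusion is trivial. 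Everything therefore comes down to a key lemma: for nontrivial $\tilde{\Delta} = {\Delta_{u_1}}^{g_1} \circ \cdots \circ {\Delta_{u_s}}^{g_s}$ with $(g_1,\ldots,g_s) \neq (0,\ldots,0)$, the fixed set $\{w : \tilde{\Delta}(w) = w\}$ equals $(\Q[z])\llbracket u_1,\ldots,u_s\rrbracket$.

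I would prove this key lemma in three steps. Since the $\partial_n$ commute, I may write $\tilde{\Delta} = \exp(\tilde{D})$ with $\tilde{D} = \sum_{n \geq 1} \frac{1}{n}\bigl(\sum_i g_i u_i^n\bigr)\partial_n$, which raises the $(u_1,\ldots,u_s)$-adic order; factoring $\exp(\tilde{D}) - \mathrm{id} = \tilde{D} \circ \Phi$ with $\Phi = \sum_{j \geq 0} \tilde{D}^j/(j+1)!$ an invertible operator commuting with $\tilde{D}$ identifies the fixed set with $\mathrm{Ker}(\tilde{D})$. Next I expand $w = \sum_{m \geq 0} W_m$ by total degree $m$ in the $u_i$ and solve $\tilde{D}(w) = 0$ degree by degree. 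The degree-$1$ part gives $\bigl(\sum_i g_i u_i\bigr)\partial_1(W_0) = 0$, so $\partial_1(W_0) = 0$; using that each $\partial_n$ annihilates coefficients lying in $\Q[z]$, an induction on $m$ collapses the degree-$(m{+}1)$ equation to $\partial_1(W_m) = 0$, forcing every $W_m$ to have coefficients in $\mathrm{Ker}(\partial_1) \cap \Hoff$. So the whole problem rests on the purely algebraic fact that $\mathrm{Ker}(\partial_1) \cap \Hoff = \Q[z]$.

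I expect this last fact to be the main obstacle, since $\partial_1$ raises the total degree and respects none of the obvious gradings. My plan is to pass to the free generators $x$ and $z$ (with $y = z - x$), so that $\partial_1(z) = 0$ and $\partial_1(x) = xz - x^2$, and to split $\partial_1 = E_0 + F$ by the number of occurrences of $x$, where $E_0(x) = xz$ preserves the $x$-degree and $F(x) = -x^2$ raises it by one (both killing $z$). Grading $w \in \mathrm{Ker}(\partial_1)$ by $x$-degree, its lowest nonzero piece must lie in $\mathrm{Ker}(E_0)$; identifying the action of $E_0$ on the $x$-degree-$j$ part with multiplication by $t_1 + \cdots + t_j$ on a commutative polynomial ring shows $E_0$ is injective for $j \geq 1$, and the cascade $E_0(w_{[j]}) = -F(w_{[j-1]})$ then annihilates all pieces of positive $x$-degree, leaving $w \in \Q[z]$. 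The remaining verifications, namely that $\sum_i g_i u_i$ and $t_1 + \cdots + t_j$ are nonzerodivisors and that $\partial_n(\Q[z]) = 0$, are routine.
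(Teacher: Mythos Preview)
Your reduction of (i) and (ii) to the fixed-point statement $\mathrm{Ker}(\tilde\Delta-\mathrm{id})=(\Q[z])\llbracket u_1,\ldots,u_s\rrbracket$ is exactly the paper's route: it also rewrites the images in (i) as $\mathrm{Ker}(\Delta^{-1}+\tau)$ and $\mathrm{Ker}(\Delta'^{-1}+\tau)$ via Lemma~\ref{lem:Ker=Im}, obtains $\Delta'^{-1}\Delta(w)=w$, and then finishes with $2w=0$ once $w$ is known to lie in $(\Q[z])\llbracket u_1,\ldots,u_s\rrbracket$. Where you diverge is in proving that fixed-point lemma (Lemma~\ref{lem:Delta=id}) and the underlying fact $\mathrm{Ker}(\partial_1)\cap\Hoff=\Q[z]$. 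The paper argues by a lexicographic leading-word analysis in $\Q\langle x,z\rangle$ (Lemmas~\ref{lem:do} and~\ref{lem:partial=0}), showing that $\partial_n$ strictly increases the largest word outside $\Q[z]$, and then iterates over the $(x,y)$-degree of $w$ to peel off $\theta_1=\partial_1$ from the expansion of $\Delta-\mathrm{id}$. Your exponential factorization $\exp(\tilde D)-\mathrm{id}=\tilde D\circ\Phi$ with $\Phi$ invertible gives the reduction $\mathrm{Ker}(\tilde\Delta-\mathrm{id})=\mathrm{Ker}(\tilde D)$ in one stroke, and your $u$-degree induction then isolates $\partial_1$ directly; this is cleaner than the paper's recursive peeling in the $(x,y)$-degree. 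For $\mathrm{Ker}(\partial_1)=\Q[z]$, your splitting $\partial_1=E_0+F$ and the identification of $E_0$ on $x$-degree~$j$ with multiplication by $t_1+\cdots+t_j$ on $\Q[t_0,\ldots,t_j]$ is correct and more structural than the paper's leading-term combinatorics, though the paper's argument has the minor advantage of handling all $\partial_n$ uniformly. Both approaches are complete; yours is somewhat more conceptual, the paper's somewhat more hands-on.
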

Reviewing Equation \eqref{eqn:Ker=D}, we obtain the following property from Proposition \ref{prop:cap} (ii). 
\begin{cor} \label{cor:cap}
    \[
        \D_{\Delta} \cap \D_{\Delta'} = (\Q[z])\llbracket u_1, \ldots, u_s\rrbracket. 
    \]
    In particular, 
    \[
        \D_{\Delta}^0 \cap \D_{\Delta'}^0 = \Q\llbracket u_1, \ldots, u_s\rrbracket. 
    \]
\end{cor}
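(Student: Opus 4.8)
The plan is to read off Corollary \ref{cor:cap} as a formal consequence of Proposition \ref{prop:cap} (ii) together with the identification \eqref{eqn:Ker=D}. First I would apply Equation \eqref{eqn:Ker=D} (the identity $\mathrm{Ker}(\Delta-\tau)=\D_\Delta$ established in the proof of Theorem \ref{thm:Ker}) to both automorphisms, obtaining $\D_\Delta=\mathrm{Ker}(\Delta-\tau)$ and $\D_{\Delta'}=\mathrm{Ker}(\Delta'-\tau)$. Intersecting these two identities and invoking Proposition \ref{prop:cap} (ii) gives
\[
    \D_\Delta\cap\D_{\Delta'}=\mathrm{Ker}(\Delta-\tau)\cap\mathrm{Ker}(\Delta'-\tau)=(\Q[z])\llbracket u_1,\ldots,u_s\rrbracket,
\]
which is the first assertion. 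No further work is needed here beyond citing the Proposition, whose hypothesis $(e_1,\ldots,e_s)\neq(f_1,\ldots,f_s)$ is precisely the standing assumption carried into the Corollary.

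For the ``in particular'' statement I would intersect both sides of the first identity with $\Hoff^0\llbracket u_1,\ldots,u_s\rrbracket$. Since by definition $\D_\Delta^0=\D_\Delta\cap\Hoff^0\llbracket u_1,\ldots,u_s\rrbracket$ and likewise for $\Delta'$, one has
\[
    \D_\Delta^0\cap\D_{\Delta'}^0=(\D_\Delta\cap\D_{\Delta'})\cap\Hoff^0\llbracket u_1,\ldots,u_s\rrbracket=(\Q[z])\llbracket u_1,\ldots,u_s\rrbracket\cap\Hoff^0\llbracket u_1,\ldots,u_s\rrbracket.
\]
It then remains to compute this last intersection. Working coefficientwise in the $u_i$, an element $\sum_I c_I(z)\,u^I$ with each $c_I\in\Q[z]$ lies in $\Hoff^0\llbracket u_1,\ldots,u_s\rrbracket$ if and only if every coefficient $c_I(z)$ lies in $\Hoff^0=\Q+x\Hoff y$, so the whole question reduces to identifying $\Q[z]\cap\Hoff^0$.

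The one genuinely arithmetic step, and the place where I would be careful, is the claim $\Q[z]\cap\Hoff^0=\Q$. I would argue by isolating a single monomial: writing $c(z)=\sum_{n\geq0}a_n z^n=\sum_{n\geq0}a_n(x+y)^n$, the pure power $y^n$ has degree $n$ and hence occurs in the expansion only through the term $z^n$, and there with coefficient $1$; thus the coefficient of $y^n$ in $c(z)$ is exactly $a_n$. Since $y^n\notin x\Hoff y$ for $n\geq1$ (it does not begin with $x$), membership of $c(z)$ in $\Q+x\Hoff y$ forces $a_n=0$ for all $n\geq1$, i.e. $c(z)=a_0\in\Q$. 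Consequently the coefficientwise intersection is $\Q\llbracket u_1,\ldots,u_s\rrbracket$, completing the proof.

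I expect essentially no obstacle at the level of the Corollary itself: all the mathematical weight is carried by Proposition \ref{prop:cap} (ii), which I am free to assume. Were one to prove that Proposition from scratch, the easy inclusion $\supseteq$ would use only $\Delta(z)=\Delta'(z)=\tau(z)=z$, whereas the reverse inclusion---showing that a common kernel element, being simultaneously fixed by $\tau$ and by both $\Delta$ and $\Delta'$, is forced into $(\Q[z])\llbracket u_1,\ldots,u_s\rrbracket$---would be the real content, presumably handled through Lemma \ref{lem:Ker=Im} and the distinctness of the exponent tuples. For the present statement, however, that is black-boxed, and the only step requiring a genuine argument is the monomial computation $\Q[z]\cap\Hoff^0=\Q$ described above.
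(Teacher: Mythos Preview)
Your proposal is correct and follows essentially the same route as the paper, which simply records that the first identity is Proposition~\ref{prop:cap}~(ii) rewritten via Equation~\eqref{eqn:Ker=D}. You additionally spell out the ``in particular'' step by computing $\Q[z]\cap\Hoff^0=\Q$; the paper leaves this implicit, but your monomial argument (isolating the coefficient of $y^n$) is a clean way to justify it.
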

\begin{rem}\upshape
    Recalling that $\D_\Delta$ obviously includes $(\Q[z])\llbracket u_1, \ldots, u_s\rrbracket$, we find that $\D_\Delta$ and $\D_{\Delta'}$ have no common element, except for trivial elements. That is, there is no inclusion relation between $\D_\Delta$ and $\D_{\Delta'}$. 
\end{rem}
In order to prove Proposition \ref{prop:cap}, we show three lemmas. 
Let $\Q\langle x, z \rangle$ be the non-commutative polynomial algebra over the rational in indeterminates $x$ and $z$, where $z=x+y$. 
First, we introduce the lexicographic order $1 < x < z$ for words of $\Q\langle x, z \rangle$. Here, the empty word is $1$. 
For a nonzero polynomial $p(x, z) \in \Q\langle x, z \rangle $, let $\LW( p(x, z) )$ denote the largest word for which the coefficient in $p(x, z)$ is nonzero. 
Note that we are not concerned with the values of the coefficients. 
\begin{lem} \label{lem:do}
    Let $n$ be a positive integer. 
    \begin{enumerate}
        \item[(i)] For a word $w \in \Q\langle x, z \rangle$, we have 
        \[
            \LW(\partial_n(xw)) = xz^{n}w. 
        \]
        \item[(ii)] For words $w_1, w_2 \in \Q\langle x, z \rangle - \Q[z]$ with $w_1 < w_2$, we have 
        \[
            \LW(\partial_n(w_1)) < \LW(\partial_n(w_2)). 
        \]
    \end{enumerate}
\end{lem}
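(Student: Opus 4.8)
The plan is to pass to the variables $x$ and $z=x+y$ (so $y=z-x$) and first record how $\partial_n$ acts there. Since $\partial_n(x)=xz^{n-1}y$ and $\partial_n(y)=-xz^{n-1}y$, we get $\partial_n(z)=0$ and $\partial_n(x)=xz^{n-1}(z-x)=xz^n-xz^{n-1}x$, whose leading word is $xz^n$. Because $\partial_n$ annihilates $z$ and satisfies the Leibniz rule, for a word $v=a_1\cdots a_k$ only the letters $a_i=x$ contribute, each such $x$ being replaced by $xz^n-xz^{n-1}x$. The single observation that drives everything is that every resulting monomial has the \emph{same} length $k+n$ (one letter is replaced by a word of length $n+1$). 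On words of a fixed length the left lexicographic order $1<x<z$ is a genuine total order, and left multiplication by a fixed word is order preserving; these two facts are what let me compute leading words reliably despite the order not being multiplicative in general.

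For part (i) I would apply Leibniz to $\partial_n(xw)=\partial_n(x)\,w+x\,\partial_n(w)$. The candidate leading word $xz^nw$ comes from $\partial_n(x)\,w$, and I claim it strictly dominates every other monomial. The companion term $xz^{n-1}xw$ loses at position $n+1$, where it carries $x<z$. For the terms of $x\,\partial_n(w)$, write $w=z^{c}x(\cdots)$ when $w$ contains an $x$ (otherwise $\partial_n(w)=0$ and there is nothing to check); every term of $\partial_n(w)$ then begins with exactly $c$ copies of $z$ followed by an $x$, whereas $z^nw$ begins with $n+c>c$ copies of $z$, so each $xT$ loses to $xz^nw$ at position $c+2$. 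Hence $xz^nw$ is the unique maximal monomial and its coefficient is $+1$, giving $\LW(\partial_n(xw))=xz^nw$ with no cancellation.

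For part (ii) I would first upgrade (i) to an arbitrary word $v$ containing an $x$: writing $v=z^{m_0}xv'$ with $z^{m_0}$ the maximal initial run of $z$'s, the identity $\partial_n(v)=z^{m_0}\partial_n(xv')$ together with order preservation of left multiplication by $z^{m_0}$ gives $\LW(\partial_n(v))=z^{m_0}xz^nv'$. Now take $w_1=z^{p}xv_1<w_2=z^{q}xv_2$ and compare according to $p$ and $q$. The case $p>q$ cannot occur, as it would force $w_1>w_2$; if $p<q$, the images $z^pxz^nv_1$ and $z^qxz^nv_2$ already disagree at position $p+1$ ($x$ against $z$); and if $p=q$, stripping the common prefix $z^pxz^n$ reduces the claim to $v_1<v_2$, which is precisely $w_1<w_2$ after removing the common prefix $z^px$. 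In every case $\LW(\partial_n(w_1))<\LW(\partial_n(w_2))$.

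The step I expect to be the real obstacle is the domination argument inside part (i): because the left lexicographic order is \emph{not} multiplicative (a prefix is smaller than its extensions), one cannot simply assert that the leading word of a product is the product of leading words. What rescues the computation is the length-homogeneity of the terms of $\partial_n$ together with the bookkeeping of the initial $z$-run; this is what simultaneously establishes maximality of $xz^nw$ and rules out any cancellation, and it is the part I would write out most carefully.
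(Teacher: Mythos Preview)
Your proof is correct and follows essentially the same route as the paper's: both split $\partial_n(xw)$ via Leibniz, write $w=z^{c}xw'$ to see that every monomial of $x\partial_n(w)$ begins $xz^{c}x\cdots$, and compare at position $c+2$ against $xz^{n}w=xz^{n+c}xw'$; for part~(ii) both derive the closed formula $\LW(\partial_n(z^{m}xv))=z^{m}xz^{n}v$ and then do a case split on the initial $z$-run (your $p$ versus $q$ trichotomy is just a repackaging of the paper's three cases on first letters). Your explicit remarks about length-homogeneity and the failure of multiplicativity of the lex order are a nice addition but do not change the argument.
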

\begin{proof}
    From the definition of $\partial_n$, it is obvious that $\partial_n(x)=xz^{n-1}(z-x)$ and $\partial_n(z)=0$. 
    In the case of $w \in \Q[z]$, (i) is valid. 
    In the other case, it suffices to check $\LW(x\partial_n(w)) < \LW(\partial_n(x)w)$. 
    We see 
    \begin{equation} \label{eqn:xw}
        \LW(\partial_n(x)w) = \LW(xz^{n-1}(z - x)w) = xz^{n}w. 
    \end{equation}
    For the term $x\partial_n(w)$, since there exists a non-negative integer $m$ such that $z^{m}xw' = w$ ($w' \in \Q \langle x, z \rangle$), we have
    \begin{align*}
        x\partial_n(w)
        &= x\partial_n(z^{m} xw') \\
        &= xz^{m} \partial_n(xw')\\
        &= xz^{m} \partial_n(x)w' + xz^{m}x\partial_n(w') \\
        &= xz^{m} xz^{n-1} (z -x ) w' + xz^{m}x\partial_n(w'). 
    \end{align*}
    Note that the above equation holds even if $m=0$. 
    By comparing the $m+2$th letter from the left, $\LW(x\partial_n(w)) < \LW(\partial_n(x)w)$ follows. 
    Therefore, we obtain
    \begin{align*}
        \LW(\partial_n(xw)) 
        &= \LW(\partial_n(x)w + x\partial_n(w)) 
        = \LW(\partial_n(x)w) 
        = xz^{n}w. 
    \end{align*}
    %
    Next, we prove (ii) in three cases. 
    First, in the case of $w_1 = xw'_1$ and $w_2 = xw'_2$ with $w'_1 < w'_2$, we obtain the following using (i): 
    \[
        \LW(\partial_n(xw'_1)) = xz^{n}w'_1 < xz^{n}w'_2 = \LW(\partial_n(xw'_2)). 
    \]
    Second, in the case of $w_1 = xw'_1$ and $w_2 = zw'_2$, we obtain
    \[
        \LW(\partial_n(xw'_1)) = xz^nw'_1 < z \LW(\partial_n(w'_2)) = \LW(\partial_n(zw'_2)). 
    \]
    Finally, in the case of $w_1 = z^m xw'_1$ and $w_2 = z^m w'_2$, where $m$ is a positive integer and $xw'_1 < w'_2$, then
    \[
        \LW(\partial_n(z^m xw'_1)) = z^m \LW(\partial_n(xw'_1)) 
    \]
    and 
    \[
        \LW(\partial_n(z^m w'_2)) = z^m \LW(\partial_n(w'_2)). 
    \]
    Now, applying other cases, we obtain $\LW(\partial_n(xw'_1)) < \LW(\partial_n(w'_2))$, and so (ii) is proved. 
\end{proof}
\begin{lem} \label{lem:partial=0}
    For any positive integer $n$, we have the following on $\Hoff\llbracket u_1, \ldots, u_s \rrbracket$: 
    \[
        \mathrm{Ker}(\partial_n) = (\Q[z])\llbracket u_1, \ldots, u_s \rrbracket. 
    \]
\end{lem}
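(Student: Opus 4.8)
The plan is to first reduce the statement to the polynomial algebra $\Hoff = \Q\langle x, z\rangle$ (recall $z = x+y$, so $x$ and $z$ generate $\Hoff$) and then to exploit the leading-word bookkeeping furnished by Lemma \ref{lem:do}. Since $\partial_n$ is $\Q\llbracket u_1,\dots,u_s\rrbracket$-linear and acts coefficientwise on a power series $\sum w_{i_1,\dots,i_s}\, u_1^{i_1}\cdots u_s^{i_s}$, an element lies in $\mathrm{Ker}(\partial_n)$ if and only if each of its coefficients $w_{i_1,\dots,i_s}\in\Hoff$ lies in $\mathrm{Ker}(\partial_n|_\Hoff)$; likewise $(\Q[z])\llbracket u_1,\dots,u_s\rrbracket$ is exactly the set of power series all of whose coefficients lie in $\Q[z]$. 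Hence it suffices to prove the single identity $\mathrm{Ker}(\partial_n|_\Hoff) = \Q[z]$.

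The inclusion $\Q[z]\subset\mathrm{Ker}(\partial_n)$ is immediate: as noted in the proof of Lemma \ref{lem:do} one has $\partial_n(z)=0$, and since $\partial_n$ is a derivation it annihilates every polynomial in $z$. For the reverse inclusion I would take $w\in\Hoff$ and decompose it uniquely as $w = p + q$, where $p\in\Q[z]$ and $q$ is a $\Q$-linear combination of words lying in $\Q\langle x,z\rangle - \Q[z]$, that is, words containing at least one $x$. Because $\partial_n(p)=0$, the hypothesis $\partial_n(w)=0$ is equivalent to $\partial_n(q)=0$, so the whole point is to show that $\partial_n$ is injective on the span of words containing an $x$.

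The key step is the leading-word argument, and this is where Lemma \ref{lem:do} does the work. Write $q = \sum_i c_i v_i$ with the $v_i$ distinct words outside $\Q[z]$ and $c_i\in\Q\setminus\{0\}$, and let $v_{i_0}$ be the largest of the $v_i$ in the lexicographic order. By Lemma \ref{lem:do} (ii) the assignment $v\mapsto\LW(\partial_n(v))$ is strictly increasing on words outside $\Q[z]$, hence injective; in particular $\LW(\partial_n(v_i)) < \LW(\partial_n(v_{i_0}))$ for every $i\neq i_0$. Consequently the word $\LW(\partial_n(v_{i_0}))$ appears with a nonzero coefficient in $\partial_n(v_{i_0})$ but in none of the other $\partial_n(v_i)$, so its coefficient in $\partial_n(q) = \sum_i c_i\,\partial_n(v_i)$ equals $c_{i_0}$ times a nonzero rational, which is nonzero. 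Thus $\partial_n(q)=0$ forces $q=0$, whence $w = p\in\Q[z]$, completing the reverse inclusion.

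I expect the only real obstacle to be the cancellation control in the last paragraph, namely guaranteeing that the leading term of $\partial_n(v_{i_0})$ cannot be cancelled by contributions from the smaller words. This is precisely what the strict monotonicity in Lemma \ref{lem:do} (ii) rules out, so once that lemma is in hand the argument is routine; the reduction to $\Hoff$ and the easy inclusion require no more than the derivation property and $\Q\llbracket u_1,\dots,u_s\rrbracket$-linearity of $\partial_n$.
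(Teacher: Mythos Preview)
Your proof is correct and uses the same underlying mechanism as the paper's, namely the leading-word monotonicity of Lemma~\ref{lem:do}. The only difference is organizational: the paper keeps the power-series coefficients in $\Q\llbracket u_1,\dots,u_s\rrbracket$, grades by total degree in $x,z$, and then splits each homogeneous piece by its first letter ($x$ versus $z$), handling the $z$-initial part by an induction on degree and the $x$-initial part by the leading-word argument; you instead reduce immediately to $\Hoff$ by $\Q\llbracket u_1,\dots,u_s\rrbracket$-linearity and apply Lemma~\ref{lem:do}~(ii) once to the finite sum $q$. Your route is a bit more economical, since the strict monotonicity in Lemma~\ref{lem:do}~(ii) already compares words of different degrees and different initial letters, so the extra layering in the paper is not needed once one has passed to $\Hoff$.
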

\begin{proof}
    It is trivial that $\mathrm{Ker}(\partial_n) \supset(\Q[z]) \llbracket u_1, \ldots, u_s \rrbracket$. 
    We check that the inclusion relation $\mathrm{Ker}(\partial_n) \subset (\Q[z])\llbracket u_1, \ldots, u_s\rrbracket$. 
    By using $\Hoff \simeq \Q \langle x, z \rangle$, the elements of $\Hoff \llbracket u_1, \ldots, u_s\rrbracket$ can be discussed as elements of $(\Q \langle x, z \rangle) \llbracket u_1, \ldots, u_s\rrbracket$. 
    Any element can be expressed by $\sum_{i \geq 0} w_i$ satisfying $\mathrm{deg}_{x,z}(w_i) = i$ unless $w_i = 0$, and so can each element of $\mathrm{Ker}(\partial_n)$. 
    Given $\sum_{i \geq 0} w_i \in \mathrm{Ker}(\partial_n)$, we prove $w_i \in (\Q[z])\llbracket u_1, \ldots, u_s\rrbracket$ 
    by induction on $i$. 
    It is obvious that $w_0 \in (\Q[z])\llbracket u_1, \ldots, u_s\rrbracket $, and that $\partial_n(w_i)=0$ for any positive integer $i$. 
    Assume this holds in the case of $i-1$ for some $i \geq 1$. 
    There are $c_{a_1, \ldots, a_i} \in \Q\llbracket u_1, \ldots, u_s\rrbracket $ such that 
    \[
        w_i = \sum_{a_1, \ldots, a_i \in \{x, z\}}c_{a_1, \ldots, a_i} a_1 \cdots a_i. 
    \]
    By applying $\partial_n$ to the above expression and using $\partial_n(zw)=z\partial(w)$ ($w \in \Hoff$), we have 
    \begin{align*}
        \partial_n(w_i) 
        &= \partial_n \left(\sum_{a_1, \ldots, a_i \in \{x, z\}}c_{a_1, \ldots, a_i} a_1 \cdots a_i \right)\\
        &= \partial_n \left( \sum_{a_2, \ldots, a_i \in \{x, z\}}c_{x, a_2, \ldots, a_i} \: x a_2 \cdots a_i \right) 
        + z \partial_n \left( \sum_{a_2, \ldots, a_i \in \{x, z\}}c_{z, a_2, \ldots, a_i} \: a_2 \cdots a_i \right). 
    \end{align*}
    The first letters of the first and second terms are $x$ and $z$, respectively. Since the first letters of the two terms are different, we find that $\partial_n(w_i) = 0$ if and only if 
    \[
        \begin{cases}
            \partial_n \left( \sum_{a_2, \ldots, a_i \in \{x, z\}}c_{x, a_2, \ldots, a_i} \: x a_2 \cdots a_i \right) =0, \\
            z \partial_n \left( \sum_{a_2, \ldots, a_i \in \{x, z\}}c_{z, a_2, \ldots, a_i} \: a_2 \cdots a_i \right) = 0. 
        \end{cases}
    \] 
    By the induction hypothesis applied to the second equation, we obtain 
    \[
        \sum_{a_2, \ldots, a_i \in \{x, z\}}c_{z, a_2, \ldots, a_i} a_2 \cdots a_i \in (\Q[z])\llbracket u_1, \ldots, u_s \rrbracket. 
    \]
    This means that $c_{z, a_2, \cdots, a_i} = 0$ unless $a_2 = \ldots = a_i = z$. 
    On the other hand, we derive a contradiction from the first equation to prove $c_{x, a_2, \ldots, a_i} = 0$. 
    Suppose that at least one of $c_{x, a_2, \ldots, a_i}$ is nonzero. 
    Let words $v_1 < \ldots < v_l$ with degree $i-1$ for a positive number $l$, then $\LW(\partial_n(xv_1)) < \ldots < \LW(\partial_n(xv_l))$ follows from Lemma \ref{lem:do}. 
    Now, there exists the largest word of $x a_2 \cdots a_i$ such that $c_{x, a_2, \ldots, a_i}$ is nonzero. Let $x b_2 \cdots b_i$ be this word. 
    Since $\LW(\partial_n(x b_2 \cdots b_i))$ is larger than any other $\LW(\partial_n(x a_2 \cdots a_i))$, we have that 
    \[
        \LW\left(\partial_n \left( \sum_{a_2, \ldots, a_i \in \{x, z\}}c_{x, a_2, \ldots, a_i} \: x a_2 \cdots a_i \right)\right) = \LW(\partial_n(x b_2 \cdots b_i)). 
    \]
    However, it is not zero, which is a contradiction. 
    %
    Therefore, we obtain that $c_{a_1,\ldots, a_i}=0$, unless $a_1 =  \cdots = a_i = z$, i.e., $w_i \in (\Q[z])\llbracket u_1, \ldots, u_s\rrbracket$. 
\end{proof}
\begin{lem} \label{lem:Delta=id}
    For integers $e_1, \ldots, e_s$, let $\Delta = {\Delta_{u_1}}^{e_1} \circ \cdots \circ {\Delta_{u_s}}^{e_s}$ on $\Hoff\llbracket u_1, \ldots, u_s\rrbracket $. If $(e_1, \ldots, e_s) \neq (0, \ldots, 0)$, then we have
    \[
        \mathrm{Ker}(\Delta - \mathrm{id}) = (\Q[z])\llbracket u_1, \ldots, u_s\rrbracket . 
    \]
\end{lem}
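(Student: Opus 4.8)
The plan is to prove the two inclusions separately, the nontrivial one by an induction on the total degree in $u_1, \ldots, u_s$ that reduces everything to the already-established kernel of a single $\partial_1$.

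The inclusion $\supset$ is immediate: since each $\Delta_{u_i}$ fixes both $z$ and every $u_j$, the automorphism $\Delta$ restricts to the identity on $(\Q[z])\llbracket u_1, \ldots, u_s\rrbracket$, so $(\Delta - \mathrm{id})$ annihilates it. For the reverse inclusion, I would first expand $\Delta$ by $u$-degree. Writing $\Delta_{u_i} = \exp\!\left(\sum_{n\geq 1}\frac{\partial_n}{n}u_i^{\,n}\right)$ and using that all the $\partial_n$ commute (Proposition \ref{prop:commutative}), one gets $\Delta = \exp\!\left(\sum_{n\geq 1}\frac{\partial_n}{n}p_n\right)$ with $p_n = \sum_{i=1}^s e_i u_i^{\,n}$, and hence a decomposition $\Delta - \mathrm{id} = \sum_{k\geq 1}\Phi_k$, where $\Phi_k$ raises the total $u$-degree by exactly $k$ (because each $\partial_n$ preserves the $u$-degree while multiplication by $p_n$ raises it by $n$). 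The crucial feature is that the lowest piece is $\Phi_1 = p_1\,\partial_1$ with $p_1 = \sum_i e_i u_i$, which is nonzero \emph{precisely} because $(e_1,\ldots,e_s)\neq(0,\ldots,0)$; this is the only place the hypothesis enters. Moreover, every $\Phi_k$ is a $\Q\llbracket u_1,\ldots,u_s\rrbracket$-linear combination of compositions of the $\partial_n$, each involving at least one $\partial$, so each $\Phi_k$ annihilates any element of $(\Q[z])\llbracket u_1,\ldots,u_s\rrbracket$. Equivalently, since $\Delta$ fixes $(\Q[z])\llbracket u_1,\ldots,u_s\rrbracket$ pointwise and the $\Phi_k(v)$ for a homogeneous $v$ lie in distinct $u$-degrees, each $\Phi_k$ must vanish there.

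With this in hand I would run the induction. Given $w \in \mathrm{Ker}(\Delta - \mathrm{id})$, decompose $w = \sum_{d\geq 0} w^{(d)}$ into its $u$-homogeneous parts and prove $w^{(d)} \in (\Q[z])\llbracket u_1,\ldots,u_s\rrbracket$ by induction on $d$. Comparing the $u$-homogeneous part of degree $d+1$ in $(\Delta-\mathrm{id})(w)=0$ gives $\sum_{k=1}^{d+1}\Phi_k(w^{(d+1-k)}) = 0$; every term with $k\geq 2$ involves $w^{(d+1-k)}$ of degree $\leq d-1$, which by the inductive hypothesis has coefficients in $\Q[z]$ and is therefore killed by $\Phi_k$, leaving $p_1\,\partial_1(w^{(d)}) = 0$. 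Since $\Hoff\llbracket u_1,\ldots,u_s\rrbracket$ is a domain and $p_1\neq0$, this forces $\partial_1(w^{(d)})=0$, whence $w^{(d)} \in (\Q[z])\llbracket u_1,\ldots,u_s\rrbracket$ by Lemma \ref{lem:partial=0}. The base case $d=0$ is the lone $k=1$ term $p_1\,\partial_1(w^{(0)})=0$ of the degree-$1$ equation, handled identically.

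I expect the main obstacle to be the bookkeeping that isolates the single term $p_1\,\partial_1(w^{(d)})$ in the degree-$(d+1)$ equation: one must verify both that $w^{(d)}$ contributes only through $\Phi_1$ and that all higher $\Phi_k$ genuinely annihilate the lower homogeneous parts already known to lie in $(\Q[z])\llbracket u_1,\ldots,u_s\rrbracket$. The cancellation of $p_1$ also deserves care: it relies on $\Hoff$ being a free, hence integral, algebra, so that $p_1 c = 0$ with $p_1\neq 0$ forces $c=0$. Once these points are secured the statement follows, with the hypothesis $(e_1,\ldots,e_s)\neq(0,\ldots,0)$ entering exactly through $p_1\neq0$.
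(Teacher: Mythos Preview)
Your argument is correct and follows the same overall strategy as the paper: isolate the $u$-degree-$1$ part of $\Delta-\mathrm{id}$, which is $p_1\partial_1$ with $p_1=\sum_i e_iu_i\neq0$, cancel $p_1$ using that $\Hoff\llbracket u_1,\ldots,u_s\rrbracket$ has no zero divisors, and then invoke Lemma~\ref{lem:partial=0}. The organization differs in two minor but pleasant ways. First, by writing $\Delta=\exp\!\bigl(\sum_n\frac{\partial_n}{n}p_n\bigr)$ you handle negative exponents $e_i$ directly, whereas the paper first makes a without-loss-of-generality reduction to positive exponents by moving some $\Delta_{u_i}^{e_i}$ to the other side. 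Second, you induct on the total $u$-degree of the homogeneous pieces $w^{(d)}$, while the paper decomposes $w=\sum_m w_m$ by $(x,y)$-degree and recursively peels off the lowest piece $w_{m_0}$; these two filtrations are interchangeable here because the $u$-degree-$k$ part of $\Delta-\mathrm{id}$ also raises the $(x,y)$-degree by exactly $k$. Your version is arguably a bit cleaner, but the mathematical content is the same.
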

\begin{proof}
    It is trivial that $\mathrm{Ker}(\Delta - \mathrm{id}) \supset (\Q[z])\llbracket u_1, \ldots, u_s\rrbracket$, and so we check $\mathrm{Ker}(\Delta - \mathrm{id}) \subset (\Q[z])\llbracket u_1, \ldots, u_s\rrbracket $. 
    We may assume $\Delta = {\Delta_{u_1}}^{e_1} \circ \cdots \circ {\Delta_{u_p}}^{e_p} \circ {\Delta_{u_{p+1}}}^{-e_{p+1}} \circ \cdots \circ {\Delta_{u_{p+q}}}^{-e_{p+q}}$ ($e_1, \ldots, e_{p+q} > 0$, $1 \leq p+q \leq s$) without loss of generality. Then, by replacing $w \in \Hoff\llbracket u_1, \ldots, u_s \rrbracket$ with ${\Delta_{u_{p+1}}}^{e_{p+1}} \circ \cdots \circ {\Delta_{u_{p+q}}}^{e_{p+q}}(w)$, it suffices to prove the following: 
    \begin{equation}
        \mathrm{Ker}\left( {\Delta_{u_1}}^{e_1} \circ \cdots \circ {\Delta_{u_p}}^{e_p} - {\Delta_{u_{p+1}}}^{e_{p+1}} \circ \cdots \circ {\Delta_{u_{p+q}}}^{e_{p+q}} \right) \subset (\Q[z])\llbracket u_1, \ldots, u_s\rrbracket . 
    \end{equation}
    Now, by focusing on the part of total degree $1$ in $u_1, \ldots, u_s$, each ${\Delta_{u_1}}^{e_1}$ is expanded as follows:
    \begin{align*}
        &{\Delta_{u_1}}^{e_1} \circ \cdots \circ {\Delta_{u_p}}^{e_p} - {\Delta_{u_{p+1}}}^{e_{p+1}} \circ \cdots \circ {\Delta_{u_{p+q}}}^{e_{p+q}} \\ 
        &= \prod_{i=1}^{p} \left( \mathrm{id} + \binom{e_i}{1}\theta_1u_i + \binom{e_i}{1}\theta_2{u_i}^{2} + \binom{e_i}{2} {\theta_{1}}^{2}{u_i}^{2} + \cdots \right) \\
        &\quad - \prod_{j=1}^{q} \left( \mathrm{id} + \binom{e_{p+j}}{1}\theta_1 u_{p+j} + \binom{e_{p+j}}{1} \theta_2{u_{p+j}}^{2} + \binom{e_{p+j}}{2} {\theta_{1}}^{2}{u_{p+j}}^{2} + \cdots \right) \\
        &= \mathrm{id} + \sum_{i = 1}^{p} e_i \theta_1 {u_i} + (\text{the total degree $2$ or higher part}) \\
        &\quad - \mathrm{id} - \sum_{j = 1}^{q} e_{p+j} \theta_1 {u_{p+j}} + (\text{the total degree $2$ or higher part}) \\
        &= \sum_{i = 1}^{p} e_i \theta_1 {u_i} -\sum_{j = 1}^{q} e_{p+j} \theta_1 {u_{p+j}} + (\text{the total degree $2$ or higher part}), 
    \end{align*}
    where $\prod$ implies the composition of maps. 
    For an element $w$ in the kernel, let $m_0$ be the lowest degree of all words with respect to $x$ and $y$. 
    Setting $w = \sum_{m \geq m_0} w_{m}$ ($w_{m} \in \Hoff\llbracket u_1, \ldots, u_s\rrbracket $) with $\mathrm{deg}_{x, y}(w_{m}) = m$ unless $w_m = 0$, we have 
    \[
        \sum_{i = 1}^{p} e_i \sum_{m \geq m_0} \theta_1(w_{m}) {u_i} 
        -\sum_{j = 1}^{q} e_{p+j} \sum_{m \geq m_0} \theta_1(w_{m}) {u_{p+j}} 
        + (\text{the total degree $2$ or higher part}) = 0. 
    \]
    Focusing the lowest-degree part in $x$ and $y$, we obtain 
    \[
        \sum_{i = 1}^{p} e_i \theta_1(w_{m_0}) {u_i} 
        -\sum_{j = 1}^{q} e_{p+j} \theta_1(w_{m_0}) {u_{p+j}} = 0. 
    \]
    The above equation is transformed as follows:
    \[
        \theta_1(w_{m_0}) \left(
            \sum_{i = 1}^{p} e_i u_i -\sum_{j = 1}^{q} e_{p+j} u_{p+j}
        \right) = 0. 
    \]
    Since $(e_1, \ldots, e_s) \neq (0, \ldots, 0)$ and $\Hoff\llbracket u_1, \ldots, u_s\rrbracket $ is an integral domain, we see $\theta_1(w_{m_0})=0$. 
    By using Lemma \ref{lem:partial=0} and $\theta_1 = \partial_1$, 
    we obtain that $w_{m_0} \in (\Q[z])\llbracket u_1, \ldots, u_s\rrbracket$ holds. 
    Now, it is easy to see that 
    \[
        (\Delta - \mathrm{id})(w-w_{m_0}) = 0. 
    \]
    The lowest degree of all words appearing in $w - w_{m_0}$ is higher than $m_0$, 
    so we obtain $w \in (\Q[z])\llbracket u_1, \ldots, u_s\rrbracket $ by applying the same argument to $w-w_{m_0}$ recursively. 
\end{proof}
Proposition \ref{prop:cap} is proved by using Lemma \ref{lem:Delta=id}. 
\begin{proof}[Proof of Proposition \ref{prop:cap}]
    We prove Proposition \ref{prop:cap} (i). By applying Lemma \ref{lem:Ker=Im} (i) to the left-hand side, it suffices to check
    \[
        \mathrm{Ker}(\Delta^{-1} + \tau) \cap \mathrm{Ker}(\Delta'^{-1} + \tau) = \{0\}. 
    \]
    Suppose $w \in \mathrm{Ker}(\Delta^{-1} + \tau) \cap \mathrm{Ker}(\Delta'^{-1} + \tau)$, then we see 
    \[
        (\Delta^{-1} + \tau)(w) = (\Delta'^{-1} + \tau)(w) = 0. 
    \]
    Therefore, we have
    \[
        \Delta(w) = -\tau(w) = \Delta'(w). 
    \]
    Since it is easy to see that $\Delta'^{-1} \circ \Delta(w) = w$, we obtain $w \in (\Q[z])\llbracket u_1, \ldots, u_s\rrbracket $ from Lemma \ref{lem:Delta=id}. 
    Reviewing $\Delta(z)=\tau(z)=z$, we have
    \[
        (\Delta^{-1} + \tau)(w)=2w. 
    \]
    Then, $w=0$ follows from $w \in \mathrm{Ker}(\Delta^{-1} + \tau)$. 
    The proof of Proposition \ref{prop:cap} (ii) is similar and so will be omitted. 
\end{proof}
%
\section*{Acknowledgments}
I would like to express my greatest appreciation to my advisor Professor Yasuo Ohno for giving me many pieces of advice. 
I also thank past and present members of our laboratory. 
This work was supported by the WISE Program for AI Electronics (Tohoku University) and JST SPRING (Grant Number JPMJSP2114). 
%

%
\address{Mathematical Institute, Tohoku University, 6-3 Aoba, Aramaki, Aoba-ku, Sendai 980-8578, Japan}

\email{E-mail address: aiki.kimura.t2@dc.tohoku.ac.jp}
\end{document}